\title{Noncommutative Euclidean spaces}
\author{Michel Dubois-Violette, Giovanni Landi}
\address[]{\textit{Michel Dubois-Violette} 
\newline \indent  
Laboratoire de Physique Th\'eorique, UMR 8627, Universit\'e Paris XI,
\newline \indent
B\^atiment 210, F-91 405 Orsay Cedex}
\email{Michel.Dubois-Violette@u-psud.fr}
\address[]{\textit{Giovanni Landi} 
\newline \indent
Matematica,
Universit\`a di Trieste, 
\newline \indent
Via A. Valerio, 12/1, 34127  Trieste, Italy 
\newline \indent
and INFN, Trieste, Italy}
\email{landi@units.it}
\numberwithin{equation}{section}
\newtheorem{theo}{Theorem}[section]
\newtheorem{lemm}[theo]{Lemma}
\newtheorem{prop}[theo]{Proposition}
\newtheorem{defi}[theo]{Definition}
\newtheorem{coro}[theo]{Corollary}
\newcommand{\dd}{\mathrm{d}}
\newcommand{\nn}{\nonumber}
\newcommand{\ca}{\mathcal{A}}
\newcommand{\cc}{\mathcal{C}}
\newcommand{\cf}{\mathcal{F}}
\newcommand{\ccR}{\mathcal{R}}
\newcommand{\fraca}{{\mathfrak A}}
\newcommand{\fracg}{{\mathfrak g}} 
\newcommand{\IC}{{\mathbb C}}   
\newcommand{\IH}{{\mathbb H}}  
\newcommand{\IN}{{\mathbb N}}   
\newcommand{\IR}{{\mathbb R}}   
\newcommand{\IS}{{\mathbb S}}   
\newcommand{\IZ}{{\mathbb Z}}   
\newcommand{\Rt}{{\IR^4_\theta}} 
\newcommand{\Ct}{{\IC^2_\theta}} 
\newcommand{\car}{{\ca_R}}
\DeclareMathOperator{\Hom}{Hom}
\DeclareMathOperator{\Ext}{Ext}
\DeclareMathOperator{\SO}{SO}        
\DeclareMathOperator{\SU}{SU}
\def\gr{\mathrm{gr}}
\def\dim{{\mbox{dim}}}
\def\Hom {{\mbox{Hom}}}
\def\Lie{{\mbox{Lie}}}
\def\fraca{{\mathfrak A}}
 \def\fracg{{\mathfrak g}}
\def\bbbone{\mbox{\rm 1\hspace {-.6em} l}}
\newcommand{\beqa}{\begin{align}}
\newcommand{\eeqa}{\end{align}}
\newcommand{\beq}{\begin{equation}}
\newcommand{\eeq}{\end{equation}}
\begin{document}

%

\begin{abstract}

We give a definition of noncommutative finite-dimensional Euclidean spaces $\mathbb R^n$. We then remind our definition of noncommutative products of Euclidean spaces $\mathbb R^{N_1}$ and $\mathbb R^{N_2}$ which produces noncommutative Euclidean spaces $\mathbb R^{N_1+N_2}$. We solve completely the conditions defining the noncommutative products of the Euclidean spaces $\mathbb R^{N_1}$ and $\mathbb R^{N_2}$ and prove that the corresponding noncommutative unit spheres $S^{N_1+N_2-1}$ are noncommutative spherical manifolds. We then apply these concepts to define ``noncommutative" quaternionic planes  and noncommutative quaternionic tori on which acts the classical quaternionic torus $T^2_{\mathbb H}=U_1(\mathbb H)\times U_1(\mathbb H)$.

\bigskip\bigskip
\centerline{\emph{Dedicated to Alain Connes for his 70th birthday}}
\smallskip
\centerline{(i.e. dix fois ``l'\^age de raison" !)}
\bigskip\bigskip\bigskip\bigskip
\end{abstract}

\maketitle

\tableofcontents
\parskip = 1 ex

\thispagestyle{empty}

\section{Introduction }

In reference \cite{mdv-lan:2017} we have defined ``noncommutative products" of finite-dimensional Euclidean spaces. Our aim here is firstly to complete the description of the properties of these constructions and secondly to put them in the general frame of the theory of regular algebras, that is of the noncommutative generalizations of the algebras of polynomials. Concerning the first point, we shall in particular prove that the  ``noncommutative spheres" underlying the  noncommutative products of Euclidean spaces are noncommutative spherical manifolds in the sense of our joint work with Alain Connes \cite{ac-lan:2001}, 
\cite{ac-mdv:2002a}. Concerning the second point we notice that the study of noncommutative generalizations of algebras of polynomials is the first step to develop a noncommutative algebraic geometry
 and that these algebras play the role of algebras of ``polynomials functions" on ``noncommutative vector spaces" or of algebras of homogeneous coordinates of noncommutative projective spaces.
 
 The plan of the paper is the following one. After this introduction, Section 2 contains a general frame for the noncommutative Euclidean spaces $\mathbb R^n$ . We define and study there the corresponding generalized Clifford algebras. Section 3 is devoted to our definition of noncommutative products  of the Euclidean space $\mathbb R^{N_1}$ with the Euclidean space $\mathbb R^{N_2}$ and to the description of its first properties. In Section 4 we completely solve the conditions defining  noncommutative products of the Euclidean spaces $\mathbb R^{N_1}$ and $\mathbb R^{N_2}$ and show that the corresponding noncommutative unit spheres $S^{N_1+N_2-1}$ are  noncommutative spherical manifolds in the sense of \cite{ac-lan:2001}, \cite{ac-mdv:2002a}. Finally in Section 5 we use the above constructions to define noncommutative 2-dimensional quaternionic spaces and noncommutative quaternionic tori.
 
For sake of completeness    we have included at the end of the paper an appendix concerning the homogeneous and nonhomogeneous quadratic algebras.\\
  
 \noindent \underbar{Notations}. We use throughout the Einstein convention of summation over repeated up-down indices. All our algebras are associative unital (mostly) complex $\ast$-algebras. Without other mention a graded algebra means here a $\mathbb N$-graded algebra.
 
  \section{Noncommutative Euclidean spaces}
 \subsection{Generalities} The correspondence (or duality) between spaces and algebras of functions on these spaces is by now familiar. The idea of noncommutative geometry is to forget the commutativity of the algebras of functions and to replace them by appropriate classes of noncommutative associative algebras and consider that these are ``algebras of functions" on some (fictive) ``noncommutative spaces".
 
 For instance, the natural algebras of functions on finite-dimensional vector spaces are the algebras of polynomial functions generated by the linear forms i.e. by the coordinates. In these polynomial algebras the coordinates commute. Given a class of noncommutative associative algebras generalizing the polynomial algebras, one may consider algebras generated by coordinates in which they satisfy other relations  than the commutation between them and defining thereby noncommutative vector spaces.
 
 Concerning the classes of noncommutative algebras generalizing the polynomial algebras, there are several possible choices. A very natural wide choice is the class of Artin-Schelter algebras \cite{art-sch:1987}, a wider choice consists in forgetting the polynomial growth condition. One can also restrict attention to quadratic algebras since algebras of polynomials are quadratic. Our constructions will be in the class of quadratic Artin-Schelter algebras which are Koszul. The usual polynomial algebras are the commutative algebras in this class as in the other above classes. We refer to algebras of the appropriate class as {\sl regular algebras}.
 
 \subsection{Noncommutative real vector spaces}
 
 There is an important subtlety hidden behind the above discussion which concerns  the reality conditions. Consider the space $\mathbb R^n$ with its canonical basis of coordinates $x^k, k\in \{1,\dots, n\}$. Then there are two natural algebras of polynomial functions on $\mathbb R^n$ : The real algebra of real polynomials functions on $\mathbb R^n$; or the complex $\ast$-algebra of complex polynomial functions on $\mathbb R^n$. Since the first algebra is the real subalgebra of hermitian elements of the second algebra this makes practically no difference. However in the noncommutative case  the situation is different. Should one consider that a noncommutative $\mathbb R^n$ corresponds to a regular real algebra generated by the coordinates $x^k$ or should one consider that a noncommutative $\mathbb R^n$ corresponds to a regular complex $\ast$-algebra generated by the hermitian elements $x^k, k\in\{1,\dots,n\}$? There is a big difference now because the real subspace of the hermitian elements of a noncommutative complex $\ast$-algebra is not an associative algebra, (it is a real Jordan algebra for the symmetrized product). Anyone knowing a little the quantum theory would say that the second choice is the right one. This is our choice in the following, that is a noncommutative $\mathbb R^n$ correspond to a regular complex $\ast$-algebra generated by the hermitian elements $x^k,k\in \{1,\dots,n\}$.
 
 In all cases regular algebras are always connected graded algebras finitely generated in degree 1 with a finite presentation. For that kind of algebra and more generally for the class of graded connected algebras,  all the homological dimensions coincide : the projective dimension of the ground field as trivial module coincides with the global dimension \cite{car:1958} and coincides with the Hochschild dimension in homology as in cohomology \cite{ber:2005}. We shall refer to it as the {\sl global dimension}.
 
 \subsection{Noncommutative Euclidean spaces}
 
 Let $\mathbb R^n$ be now the Euclidean $\mathbb R^n$ with metric $\sum_kx^k\otimes x^k$ where the coordinates $x^k$ are the dual basis of the canonical orthonormal basis of $\mathbb R^n$. A noncommutative Euclidean space $\mathbb R^n$ will be defined as corresponding to a regular complex $\ast$-algebra $\ca$ generated by the hermitian elements $x^k$ such that the quadratic element $\sum_k(x^k)^2$ belongs to the center $Z(\ca)$ of $\ca$.
 
 An important class of such algebras appeared in our joint work with Alain Connes \cite{ac-lan:2001}, \cite{ac-mdv:2002a}. Indeed in this work a large class of noncommutative spheres $S^{n-1}$, refered to as noncommutative spherical manifolds, are defined by $K$-homological conditions (satisfied by the classical spheres). The homogeneisation of these conditions leads to quadratic $\ast$-algebras $\ca$ generated by $n$ hermitian elements $x^k,k\in \{1,\dots, n\}$, such that $\sum_k(x^k)^2$ is central and then the noncommutative spheres $S^{n-1}$ appear as the unit spheres in the noncommutative Euclidean spaces $\mathbb R^n$ corresponding to the $\ast$-algebras $\ca$, i.e. the noncommutative $n-1$ spheres correspond to the quotient algebras of the $\ca$ by the ideals generated by $\sum(x^k)^2-1$. The algebras $\ca$ for $n\leq 4$ have been analyzed in details in \cite{ac-mdv:2003} and \cite{ac-mdv:2008}. It turns out in particular that they are Koszul Artin-Schelter algebras of global dimensions $n$ and are furthermore Calabi-Yau \cite{gin:2006} algebras since the preregular multilinear forms corresponding to the ``volume forms" are graded-cyclic potentials (or superpotentials) for these algebras \cite{mdv:2007}. It is expected that these properties still hold for the $\ca$ with arbitrary $n\in \mathbb N$. For each $n$, there is a family of algebras $\ca$, as above which contains of course the classical commutative one, that is the algebra $\mathbb C[x^k]$ of complex polynomials functions on the Euclidean space $\mathbb R^n$ (which is the only commutative algebra of the family).
 
 In the following {\sl a noncommutative Euclidean space} $\mathbb R^n$ will dualy be defined by a quadratic complex $\ast$-algebra $\ca$ generated by $n$ hermitian elements $x^k(k\in\{1,\dots, n\}$) which is Koszul and Artin-Schelter of global dimension $n$ and which is such that $\sum^n_{k=1}(x^k)^2$ is central in $\ca$, (i.e. is an element  of the center $Z(\ca)$ of $\ca$). In this context, $\ca$ is the analog of the algebra of complex polynomial functions on $\mathbb R^n$, (i.e. of the symmetric algebra of the complexified space of the dual of $\mathbb R^n$), while the Koszul dual $\ca^!$ of $\ca$ is the analog of the exterior algebra of the complexified space of $\mathbb R^n$. Since $\ca$ is Koszul it follows that $\ca^!$ is also Koszul and since $\ca$ is Gorenstein it follows that $\ca^!$ is a graded Frobenius algebra \cite{smi:1996}, \cite{lu-pal-wu-zha:2004}. One has $\ca^!=\oplus^n_{k=0}\ca^!_k$ where $\dim(\ca^!_k)=\dim(\ca^!_{n-k})$ so in particular $\dim(\ca^!_n)=1$ and any element $w\not= 0$ of the dual $(\ca^!_n)^\ast$ of $\ca^!_n$ is a preregular multilinear form which is a potential (or superpotential) for $\ca$, 
 \cite{mdv:2007}.
 
 \subsection{Generalized Clifford algebras}
 
 Let $\ca$ be a quadratic algebra and let $\ca^!$ be its Koszul dual. Let $(x^k)$ be a basis of $\ca_1$ with dual basis $(\theta_k)$ of $\ca^!_1$ ($k \in \{1,\dots,n\}$),  where $\ca_1$ and $\ca^!_1$ are the subspaces of the elements of degree 1 of $\ca$ and $\ca^!$. One has the following lemma.
 \begin{lemm}\label{eq}
 In the algebra $\ca^!\otimes \ca$ one has $(\theta_k x^k)^2=0$ and moreover given the relations of the $\theta_k$ in $\ca^!$ the relations of $\ca$ are equivalent to $(\theta_kx^k)^2=0$ for the $x^k$.
 \end{lemm}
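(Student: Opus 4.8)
The plan is to read $\theta_kx^k$ as the canonical Koszul element and to square it using the three standard dualities attached to a quadratic algebra. Write $V=\ca_1$ with basis $(x^k)$ and $V^\ast=\ca^!_1$ with dual basis $(\theta_k)$, so that $\ca=T(V)/(R)$ and $\ca^!=T(V^\ast)/(R^\perp)$, where $R\subseteq V\otimes V$ is the space of quadratic relations and $R^\perp\subseteq V^\ast\otimes V^\ast$ its annihilator. The element $\theta_kx^k=\sum_k\theta_k\otimes x^k\in\ca^!_1\otimes\ca_1$ is precisely the image of $\mathrm{id}_V$ under $\End(V)\cong V^\ast\otimes V$.

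First I would compute the square in $\ca^!\otimes\ca$, multiplying the first leg in $\ca^!$ and the second in $\ca$:
\[
(\theta_kx^k)^2=\sum_{k,l}\theta_k\theta_l\otimes x^kx^l\ \in\ \ca^!_2\otimes\ca_2 .
\]
The key is the duality $(\ca^!_2)^\ast\cong R$: from $\ca_2=(V\otimes V)/R$ one gets $(\ca_2)^\ast=R^\perp$, while from $\ca^!_2=(V^\ast\otimes V^\ast)/R^\perp$ one gets $(\ca^!_2)^\ast=(R^\perp)^\perp=R$. Fix a basis $(\epsilon^\alpha)$ of $\ca^!_2$ with dual basis $(\rho_\alpha)$ of $(\ca^!_2)^\ast=R$, and write $\rho_\alpha=\sum_{k,l}(\rho_\alpha)_{kl}\,x^k\otimes x^l\in R$. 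The pairing $\langle x^m\otimes x^n,\theta_k\otimes\theta_l\rangle=\delta^m_k\delta^n_l$ gives the expansion $\overline{\theta_k\theta_l}=\sum_\alpha(\rho_\alpha)_{kl}\,\epsilon^\alpha$ in $\ca^!_2$, whence the square takes the transparent form
\[
(\theta_kx^k)^2=\sum_\alpha\epsilon^\alpha\otimes\overline{\rho_\alpha},
\]
where $\overline{\rho_\alpha}$ denotes the image of $\sum_{k,l}(\rho_\alpha)_{kl}x^kx^l$ in $\ca_2$.

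Both assertions then follow at once. For the vanishing, each $\rho_\alpha$ lies in $R$, so $\overline{\rho_\alpha}=0$ in $\ca_2=(V\otimes V)/R$ and therefore $(\theta_kx^k)^2=0$. For the converse I would run the same computation with the $x^k$ treated as free generators of $T(V)$, keeping the $\theta_k$ subject to the relations $R^\perp$ of $\ca^!$; the identical expansion yields $(\theta_kx^k)^2=\sum_\alpha\epsilon^\alpha\otimes\rho_\alpha$ in $\ca^!_2\otimes T(V)_2$. Since $(\epsilon^\alpha)$ is a basis of $\ca^!_2$, this element vanishes if and only if each $\rho_\alpha=0$ in $T(V)_2=V\otimes V$; and as the $\rho_\alpha$ form a basis of $R$, demanding $(\theta_kx^k)^2=0$ is exactly demanding the quadratic relations $R$ of $\ca$. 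This is the claimed equivalence.

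The only genuinely delicate point is to keep the three canonical dualities straight---$(\ca_2)^\ast=R^\perp$, $(\ca^!_2)^\ast=R$, and the compatibility of the degree-one pairing with its degree-two counterpart---so that the identity tensor of $V\otimes V$, pushed to the quotients, is correctly recognized as $\sum_\alpha\epsilon^\alpha\otimes\overline{\rho_\alpha}$. Once that identification is secured, no computation remains and both statements are formal consequences.
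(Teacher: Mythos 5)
Your proof is correct and takes exactly the route the paper intends: the paper's entire proof is the remark that the lemma ``follows from the definition of the Koszul duality,'' and your argument --- identifying $(\ca^!_2)^\ast$ with $R$ via $(R^\perp)^\perp=R$ and expanding $(\theta_k x^k)^2=\sum_\alpha \epsilon^\alpha\otimes\overline{\rho_\alpha}$ so that vanishing is equivalent to imposing the relations $R$ on the $x^k$ --- is precisely that one-line proof written out in detail.
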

 The proof of this lemma follows from the definition of the Koszul duality. Notice also that one can exchange the roles of $\ca$ and $\ca^!$ by using $(\ca^!)^!=\ca$. 
 
 Let now $\ca$ be a quadratic $\ast$-algebra corresponding to a noncommutative Euclidean space $\mathbb R^n$ with orthonormal coordinates $x^k$ ($k\in \{ 1,\dots, n\}$) as in \S 2.3 and let $(\theta_k)$ be the dual basis of $\ca^!_1$. Let 
\[
 r^{k\ell}_\Lambda \theta_k \theta_\ell=0,\,  (\Lambda\in \{1,\dots,m\})
 \]
  be a basis of the quadratic relations of $\ca^!$ which is hermitian in the sense that one has
 \begin{equation}
 \overline{r^{k\ell}_\Lambda}=r^{\ell k}_\Lambda
 \label{sab}
 \end{equation}
 for any $\Lambda\in \{1,\dots, m\}$ and $k,\ell\in\{1,\dots,n\}$.
 
 The following definition of generalized Clifford algebra which extends the one used in \cite{ac-mdv:2002a} in particular cases (e.g. for $\theta$-deformations) is taken  from \cite{bel-ac-mdv:2011} which is still in preparation.

 \begin{defi}\label{Cl}
 The (generalized) Clifford algebra $\cc\ell(\ca)$ of $\ca$ is the complex unital $\ast$-algebra generated by $n$ hermitian elements $\Gamma_k$ with relations
 \begin{equation}\label{Gama}
 r_\Lambda^{k\ell} \Gamma_k \Gamma_\ell=\sum^n_{i=1} r_\Lambda^{ii}\bbbone
 \end{equation}
 for any $\Lambda\in \{1,\dots,m\}$.
 \end{defi}
 The algebra $\cc\ell(\ca)$ is a non homogeneous quadratic algebra with $\ca^!$ as  quadratic part and only non homogeneous terms of degree 0. It is not $\mathbb N$-graded but only $\mathbb Z_2$-graded and filtered by
 \[
 \cf^q = F^q(\cc\ell(\ca)) = \ \{ \text{elements of degree in}\  \Gamma\leq q \}
 \]
 for  $q\in \mathbb N$ and  $\cf^q=0$ for $q\leq -1$. One has a canonical surjective homomorphism of graded algebras of $\ca^!$  onto the associated graded algebra $\gr(\cc\ell(\ca))$
 \begin{equation}\label{can}
 \text{can} : \ca^!\rightarrow \gr(\cc\ell(\ca))=\oplus_p \cf^p/\cf^{p-1}
 \end{equation}
 which induces the isomorphism $\ca^!_1\simeq \cf^1/\cf^0$ of vector spaces.
 
 The  Koszulity of $\ca^!$ and the centrality of $\sum_k(x^k)^2$ in $\ca$ imply the following Poincar\'e-Birkhoff-Witt (PBW) property.
 
 \begin{prop}\label{PBW}
 
 The canonical homomorphism $\mathrm{can}$ of  \eqref{can} is an isomorphism of graded algebras.
 \end{prop}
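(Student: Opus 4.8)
The plan is to recognize $\cc\ell(\ca)$ as a non-homogeneous quadratic algebra whose homogeneous part is exactly $\ca^!$, and to invoke the Poincar\'e--Birkhoff--Witt criterion of Braverman and Gaitsgory for algebras of Koszul type, as recalled in the appendix. Writing $V=\ca^!_1$ with basis $(\theta_k)$ and $R\subseteq V\otimes V$ for the span of the relations $r_\Lambda=r_\Lambda^{k\ell}\theta_k\otimes\theta_\ell$, the defining relations \eqref{Gama} have the form $r_\Lambda-\alpha(r_\Lambda)\bbbone$, where the non-homogeneous part is carried entirely by the degree-$0$ (constant) linear map
\[
\alpha:R\to\mathbb{C},\qquad \alpha(r_\Lambda)=\sum_{i=1}^n r_\Lambda^{ii},
\]
the degree-$1$ (linear) component being absent. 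Since $\ca$ is Koszul, so is the homogeneous part $T(V)/(R)=\ca^!$, and the Braverman--Gaitsgory theorem therefore applies: it asserts that $\mathrm{can}$ is an isomorphism if and only if the associated consistency conditions on $\alpha$ hold.

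Because the linear component of the non-homogeneous term vanishes, all of the Braverman--Gaitsgory conditions except one are satisfied trivially, and the single surviving condition is
\[
(\alpha\otimes\id-\id\otimes\alpha)\big|_{(R\otimes V)\cap(V\otimes R)}=0
\]
as a map $(R\otimes V)\cap(V\otimes R)\to V$. So the whole proof reduces to deriving this identity from the centrality hypothesis on $\sum_k(x^k)^2$.

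The key observation I would use is that $\alpha$ is simply the restriction to $R$ of the Euclidean metric element $Q^\ast=\sum_k x^k\otimes x^k\in\ca_1\otimes\ca_1=(V\otimes V)^\ast$, since $\langle x^k\otimes x^k,\theta_i\otimes\theta_j\rangle=\delta_{ij}$ recovers $\sum_i r_\Lambda^{ii}$ on $r_\Lambda$. I would then dualize. Using $\bigl((R\otimes V)\cap(V\otimes R)\bigr)^\perp=R^\perp\otimes\ca_1+\ca_1\otimes R^\perp$ inside $\ca_1^{\otimes 3}$, together with the fact that $R^\perp$ is precisely the space of quadratic relations of $\ca$ (so that $\ca_3=\ca_1^{\otimes 3}/(R^\perp\otimes\ca_1+\ca_1\otimes R^\perp)$), the displayed condition becomes, after pairing with an arbitrary $x^j\in\ca_1$, the requirement that
\[
\sum_k\bigl(x^k\otimes x^k\otimes x^j-x^j\otimes x^k\otimes x^k\bigr)\ \in\ R^\perp\otimes\ca_1+\ca_1\otimes R^\perp
\]
for every $j$; that is, the vanishing in $\ca_3$ of $[\,\sum_k(x^k)^2,\,x^j\,]$. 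As $\ca$ is generated in degree $1$, this is exactly the centrality of $\sum_k(x^k)^2$, which holds by hypothesis. Hence the condition is met and $\mathrm{can}$ is an isomorphism.

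I expect the main obstacle to be the duality bookkeeping in the third step: correctly identifying $\alpha$ with the metric element, computing the annihilator of $(R\otimes V)\cap(V\otimes R)$ through $(A\cap B)^\perp=A^\perp+B^\perp$, and checking that the resulting tensor is precisely the commutator $[\,\sum_k(x^k)^2,\,x^j\,]$ lifted to $\ca_1^{\otimes 3}$. Once this dictionary between the Braverman--Gaitsgory condition and centrality is set up, the conclusion is immediate, the Koszulity of $\ca^!$ being exactly what guarantees that this necessary condition is also sufficient.
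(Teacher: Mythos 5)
Your proof is correct and takes essentially the same route the paper intends: view $\cc\ell(\ca)$ as a nonhomogeneous quadratic algebra with quadratic part $\ca^!$, invoke the Braverman--Gaitsgory criterion recalled in Appendix A (Koszulity of $\ca^!$, equivalent to that of $\ca$, making conditions \eqref{i-ii} sufficient), and check the one nontrivial consistency condition. Your duality computation identifying that condition with the vanishing of $[\sum_k(x^k)^2,x^j]$ in $\ca_3$ is exactly the detail the paper leaves implicit when it asserts that Koszulity and centrality imply the PBW property.
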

 
 Thus any basis of $\ca^!$ gives a basis of $\cc\ell(\ca)$, in particular $\dim(\cc\ell(\ca))=\dim(\ca^!)$.
 \begin{coro}\label{Rprime}
 With the above definition and notations the quadratic relations defining $\ca$ are equivalent to
 \begin{equation}
 (\Gamma_kx^k)^2=\bbbone\otimes \sum_p(x^p)^2
 \label{relpr}
 \end{equation}
for the $x^k$.
 \end{coro}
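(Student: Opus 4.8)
The plan is to combine Lemma~\ref{eq} with the defining relations of the Clifford algebra from Definition~\ref{Cl}, working inside the tensor product algebra $\cc\ell(\ca)\otimes\ca$. First I would recall from Lemma~\ref{eq} (applied with the roles of $\ca$ and $\ca^!$ exchanged via $(\ca^!)^!=\ca$) that the relations of $\ca$ are controlled by the square of the ``universal'' element built from the generators and their duals; here the analogue of $\theta_k$ is replaced by the Clifford generator $\Gamma_k$, whose relations \eqref{Gama} are precisely the nonhomogeneous deformation of the relations $r_\Lambda^{k\ell}\theta_k\theta_\ell=0$ of $\ca^!$.

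The key computation is to expand $(\Gamma_k x^k)^2$ in $\cc\ell(\ca)\otimes\ca$. Since the $x^k$ commute with the $\Gamma_k$ across the tensor product, one gets
\begin{equation}
(\Gamma_k x^k)^2 = \Gamma_k\Gamma_\ell \otimes x^k x^\ell .
\end{equation}
Now I would use that by Lemma~\ref{eq} the relations of $\ca$ on the $x^k$ are equivalent to $r_\Lambda^{k\ell}\,\theta_k\theta_\ell$ annihilating the product $x^k x^\ell$ appropriately; concretely, the products $x^k x^\ell$ are subject to exactly the constraints dual to the relations $r_\Lambda^{k\ell}$. Substituting the Clifford relation \eqref{Gama}, which replaces $r_\Lambda^{k\ell}\Gamma_k\Gamma_\ell$ by $\sum_i r_\Lambda^{ii}\bbbone$, the homogeneous quadratic part collapses and the inhomogeneous term $\sum_i r_\Lambda^{ii}\bbbone$ assembles—upon contracting against the $x^k x^\ell$—into $\bbbone\otimes\sum_p (x^p)^2$. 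This is where the Euclidean hypothesis enters: the metric being $\sum_k x^k\otimes x^k$ means the diagonal coefficients $\sum_i r_\Lambda^{ii}$ pair with the $x^k$ precisely to reproduce the central quadratic element $\sum_p(x^p)^2$.

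I would then argue the equivalence in both directions. For one direction, the relations of $\ca$ imply \eqref{relpr} by the computation just sketched. For the converse, I would observe that \eqref{relpr} is the direct nonhomogeneous lift of the statement $(\theta_k x^k)^2=0$ from Lemma~\ref{eq}: projecting \eqref{relpr} onto the associated graded algebra $\gr(\cc\ell(\ca))$ via the isomorphism $\mathrm{can}$ of Proposition~\ref{PBW} recovers exactly $(\theta_k x^k)^2=0$, which by Lemma~\ref{eq} is equivalent to the relations of $\ca$. The PBW property of Proposition~\ref{PBW} is essential here, since it guarantees that the quadratic part of $\cc\ell(\ca)$ is faithfully $\ca^!$ and hence that no relations among the $x^k$ are lost or spuriously created when passing between $\cc\ell(\ca)\otimes\ca$ and $\ca^!\otimes\ca$.

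The main obstacle I anticipate is bookkeeping the nonhomogeneous term correctly, namely verifying that the degree-zero piece $\sum_i r_\Lambda^{ii}\bbbone$ coming from \eqref{Gama} reassembles into exactly $\bbbone\otimes\sum_p(x^p)^2$ rather than some other central combination. This requires using the hermiticity condition \eqref{sab} together with the specific normalization that $\sum_k(x^k)^2$ is the central Euclidean element, and checking that the contraction $r_\Lambda^{ii} x^k x^\ell$ against the relation basis does not introduce extra cross terms. Everything else is a formal consequence of Koszul duality as encoded in Lemma~\ref{eq} and the PBW isomorphism.
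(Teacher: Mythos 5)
Your proposal is correct and follows exactly the route the paper intends: the paper's proof is precisely the one-line observation that the corollary is an ``immediate consequence of Lemma~\ref{eq} and Proposition~\ref{PBW}'', and you have filled in that argument faithfully --- expanding $(\Gamma_k x^k)^2=\Gamma_k\Gamma_\ell\otimes x^kx^\ell$, using the Clifford relations \eqref{Gama} to produce the inhomogeneous term for one direction, and projecting onto $\gr(\cc\ell(\ca))\simeq\ca^!$ via the PBW isomorphism to reduce the converse to Lemma~\ref{eq}. One minor remark: the reassembly of the degree-zero terms into $\bbbone\otimes\sum_p(x^p)^2$ needs only the duality between the relation space of $\ca^!$ and that of $\ca$ (the orthogonality \eqref{orth}), not the hermiticity condition \eqref{sab}, which enters only in making $\cc\ell(\ca)$ a $\ast$-algebra.
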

 
 This is an immediate consequence of Lemma \ref{eq} and Proposition \ref{PBW} above. Notice that a basis of the quadratic relations of $\ca$ reads $r^S_{k\ell} x^kx^\ell=0$ for $S\in\{1,\dots,n^2-m\}$ where the $r^S_{k\ell}$ which can be chosen hermitian satisfy the orthogonality relations 
 \begin{equation}
 r^{k\ell}_\Lambda r^S_{k\ell}=0
 \label{orth}
 \end{equation}
 for $\Lambda\in \{1,\dots,m\}, S\in\{1,\dots,n^2-m\}$.
 
 \section{Noncommutative products of Euclidean spaces}
 
 In this section we recall the definition introduced in \cite{mdv-lan:2017} of the noncommutative products of the Euclidean space $\mathbb R^{N_1}$ with the Euclidean space $\mathbb R^{N_2}$. The letters $\lambda, \mu, \nu, \rho, \dots$ from the middle of the Greek alphabet run in $\{1,\dots,N_1\}$ and label the coordinates of $\mathbb R^{N_1}$ while the letters $\alpha, \beta, \gamma, \delta,\dots$ from the beginning of the Greek alphabet run in $\{1,\dots,N_2\}$ and label the coordinates of $\mathbb R^{N_2}$.
 
 \subsection{The quadratic $\ast$-algebra $\ca_R$}
 Let $\ca_R$ be the complex quadratic  algebra generated by the independent elements $x^\lambda_1$ and $x^\alpha_2$ ($\lambda\in \{1,\dots, N_1\},$  $\alpha\in\{1,\dots, N_2\})$ with relations 
 \begin{equation}
 x^\lambda_1 x^\mu_1=x^\mu_1 x^\lambda_1,\>\>\> x^\alpha_2x^\beta_2=x^\beta_2x^\alpha_2,\> \> \> x^\lambda_1x^\alpha_2=R^{\lambda\alpha}_{\beta\mu}\>\>\> x^\beta_2 x^\mu_1,\>\>\>  x^\alpha_2 x^\lambda_1= \bar R^{\lambda\alpha}_{\beta\mu} x^\mu_1 x^\beta_2
 \label{alde}
 \end{equation}
for a suitable class of ``matrix" $R=(R^{\lambda\alpha}_{\beta\mu})$,  where the $\bar R^{\lambda\alpha}_{\beta\mu} \in \mathbb C$ denote  as usual the complex conjugates of the $R^{\lambda\alpha}_{\beta\mu}\in \mathbb C$. The class of the $R$ will be defined progressively by conditions throughout the whole section 3.

As any quadratic complex algebra, $\ca_R$ is a graded algebra $\ca_R=\oplus_{n\in\mathbb N}(\ca_R)_n$ which is connected, that is $(\ca_R)_0=\mathbb C\bbbone$. 

The quadratic relations (\ref{alde}) of $\ca_R$ imply that there is a unique structure of $\ast$-algebra on $\ca_R$ for which the $x^\lambda_1$ $(\lambda\in \{1,\dots,N_1\})$ and the $x^\alpha_2$ $(\alpha\in \{1,\dots, N_2\})$ are hermitian. In other words there is a unique antilinear (=conjugate-linear) involution $f\mapsto f^\ast$ on $\ca_R$ such that $(fg)^\ast=g^\ast f^\ast$ for $f,g\in\ca_R$ and such that $x^\lambda_1=(x^\lambda_1)^\ast$ and $x^\alpha_2=(x^\alpha_2)^\ast$ for $\lambda\in \{1,\dots, N_1\}$, $\alpha\in\{1,\dots,N_2\}$. Furthermore this structure is graded in the sense that one has $f^\ast \in(\ca_R)_n\Leftrightarrow f\in (\ca_R)_n$. In the following $\ca_R$ is endowed with this $\ast$-algebra structure and we refer to $\ca_R$ as the quadratic $\ast$-algebra generated by the hermitian elements  $x^\lambda_1$ and $x^\alpha_2$ with the relations (\ref{alde}).

The $x^\lambda_1 x^\mu_1$ for $\lambda\leq \mu$ and the $x^\alpha_2 x^\beta_2$ for $\alpha\leq \beta$ are linearly independent in $(\ca_R)_2$ and generate $(\ca_R)_2$ with the $x^\lambda_1 x^\alpha_2$. It is also natural in this context to assume that the $x^\alpha_1 x^\lambda_2$ are independent which implies the equations
\begin{equation}
\bar R^{\lambda\alpha}_{\beta\mu} R^{\mu\beta}_{\gamma\nu}= \delta^\lambda_\nu \delta^\alpha_\gamma
\label{real}
\end{equation}
which then imply that the $x^\lambda_1 x^\alpha_2$ are also independent. Finally this implies in particular that the $x^\lambda_1 x^\mu_1$ with $\lambda\leq \mu$, the $x^\alpha_2 x^\beta_2$ with $\alpha\leq \beta$ and the $x^\nu_1 x^\gamma_2$ define a basis of $(\ca_R)_2$ while by definition the $x^\lambda_1$ and the $x^\alpha_2$ form a basis of $(\ca_R)_1$.

More generally relations (\ref{alde}) imply that the $x^{\lambda_1}\dots x^{\lambda_p} x^{\alpha_1}\dots x^{\alpha_{n-p}}$ for $\lambda_i\leq \lambda_{i+1}$, $\alpha_j\leq \alpha_{j+1}$ and $0\leq p\leq n$ generate $(\ca_R)_n$, but it is natural to expect more, namely that they are linearly independent forming therefore a basis of $(\ca_R)_n$. Indeed since we wish that $\ca_R$ corresponds to a noncommutative product of $\mathbb R^{N_1}$  with  $\mathbb R^{N_2}$ that is that $\ca_R$ is a ``noncommutative tensor product" algebra of the $\ast$-algebra $\mathbb C[x^\lambda_1]$ of the complex polynomials  on $\mathbb R^{N_1}$ with the $\ast$-algebra $\mathbb C [x^\alpha_2]$ of the complex polynomials on $\mathbb R^{N_2}$ it is natural to assume that one has the isomorphism 
\begin{equation}
\ca_R\simeq \mathbb C[x^\lambda_1]\otimes \mathbb C[x^\alpha_2]
\label{isovec}
\end{equation}
of graded complex vector spaces. This will be a consequence of the Yang-Baxter condition that we shall impose in the next subsection 3.2.

Let $(\theta^1_\lambda,\theta^2_\alpha)$ denote the dual basis of the basis $(x^\mu_1,x^\beta_2)$ of $(\ca_R)_1$. Then the Koszul dual algebra of the quadratic algebra $\ca_R$ is the quadratic algebra $\ca^!_R$ generated by the $\theta^1_\lambda$ and $\theta^2_\alpha$ $(\lambda\in\{1,\dots,N_1\}, \alpha\in\{1,\dots,N_2\})$ with relation
\begin{equation}
\theta^1_\lambda\theta^1_\mu=-\theta^1_\mu\theta^1_\lambda,\>\>\> \theta^2_\alpha\theta^2_\beta=-\theta^2_\beta\theta^2_\alpha,\> \>\> \theta^2_\beta\theta^1_\mu=-R^{\lambda\alpha}_{\beta\mu} \theta^1_\lambda\theta^2_\alpha, \> \>\>\theta^1_\mu \theta^2_\beta=-\bar R^{\lambda\alpha}_{\beta\mu} \theta^2_\alpha \theta^1_\lambda
\label{aldek}
\end{equation}
for $\lambda,\mu\in \{1,\dots,N_1\},\ \alpha,\beta\in\{1,\dots,N_2\}$.

It is a consequence of (\ref{alde}) and (\ref{real}) that the $\theta^1_\lambda \theta^1_\mu$ with $\lambda<\mu$, the $\theta^2_\alpha\theta^2_\beta$ with $\alpha<\beta$  and the $\theta^1_\nu\theta^2_\gamma$ define a basis of $(\ca^!_R)_2$ and as above, it is natural to expect that, more generally, one has the isomorphism
\beq\label{disovec}
\ca^!_R\simeq \wedge\mathbb C^{N_1+N_2}
\end{equation}
of graded vector spaces where $\wedge\mathbb C^{N_1+N_2}$ denotes the exterior algebra of the complexified space of $\mathbb R^{N_1+N_2}$. This will be also a consequence of the Yang-Baxter condition that we now define.

\subsection{The involutive matrix $\ccR$ and the Yang-Baxter condition}
Let us put together the coordinates,  defining the
$x^a$ for $a\in \{1,2, \dots, N_1+N_2\}$ by $x^\lambda=x^\lambda_1$ and 
$x^{\alpha+N_1}=x^\alpha_2$. Then, the relations \eqref{alde} can be written in the form
 \beq\label{aldeb}
x^a x^b = \ccR^{a \, b}_{c \, d} \,\, x^c x^d 
\eeq
with the following expressions for  the $\ccR^{a \, b}_{c \, d}$, 
\begin{align}\label{rtot}
 \ccR^{\lambda \mu}_{\nu \rho} & = \delta^\lambda_\rho \, \delta^\mu_\nu ,
\qquad  \ccR^{\gamma \delta}_{\alpha \beta} = \delta^\gamma_\beta \, \delta^\delta_\alpha \nn \\ 
\ccR^{\lambda \alpha}_{\beta \mu} & = R^{\lambda \alpha}_{\beta \mu} ,\qquad 
\ccR^{\alpha \lambda}_{\mu \beta} = \overline{R}^{\lambda \alpha}_{\beta \mu} \nn \\
\ccR^{\lambda \mu}_{\alpha \nu} & = \ccR^{\lambda \mu}_{\alpha \beta} = \ccR^{\lambda \mu}_{\nu \beta} =0 , \nn \\
\ccR^{\alpha \gamma}_{\lambda \beta} & = \ccR^{\alpha \gamma}_{\lambda \mu} = \ccR^{\alpha \gamma}_{\beta \mu} = 0 , \nn \\
\ccR^{\lambda \alpha}_{\mu \nu} & =  \ccR^{\lambda \alpha}_{\beta \gamma} 
= \ccR^{\lambda \alpha}_{\mu \beta} =0 , \nn \\
\ccR^{\alpha \lambda}_{\mu \nu} & = \ccR^{\alpha \lambda}_{\beta \gamma} = \ccR^{\alpha \lambda}_{\beta \mu} = 0 
\end{align}
for $\lambda, \mu, \nu, \rho\in\{1,\dots,N_1\}$ and $\alpha,\beta,\gamma,\delta\in \{1,\dots,N_2\}$.

The $\ccR^{ab}_{cd}$ are the matrix elements of an endomorphism of $(\ca_R)_1\otimes (\ca_R)_1$.
It follows from \eqref{real} that the $\ccR$ matrix in \eqref{rtot} is involutive, that is
\beq\label{involr}
\ccR^2 = I \otimes I
\eeq
where $I$ is the identity mapping of $(\ca_R)_1$ onto itself.

We next  impose that the matrix $\ccR$ satisfies the Yang-Baxter equation
\beq\label{rvan}
(\ccR \otimes I) (I\otimes \ccR) (\ccR \otimes I) = (I \otimes \ccR)(\ccR \otimes I) (I \otimes \ccR) 
\eeq
which in component, for indices $a, b, c,\dots \in\{1,2, \dots, N_1+N_2\}$ reads
\beq\label{rvan1}
\big( (\ccR \otimes I) (I\otimes \ccR) (\ccR \otimes I) - (I \otimes \ccR)(\ccR \otimes I) (I \otimes \ccR) \big)^{a b c}_{\cdots} = 0  
\eeq
And by using the expressions \eqref{rtot} leads to 

\begin{prop}
The cubic (Yang-Baxter) equations \eqref{rvan1} for the matrix $\ccR$ given in \eqref{rtot} are equivalent to the following quadratic conditions for the matrix $R$:
\beq\label{yb1}
\left\{
\begin{array}{ll}
R^{\lambda\alpha}_{\gamma\rho} R^{\rho\beta}_{\delta\mu} 
= R^{\lambda\beta}_{\delta\rho} R^{\rho\alpha}_{\gamma\mu} & \qquad \textup{for indices} \qquad (a, b, c)=(\lambda\alpha\beta)\\
\\
\overline{R}^{\lambda\alpha}_{\gamma\rho} \overline{R}^{\rho\beta}_{\delta\mu} 
= \overline{R}^{\lambda\beta}_{\delta\rho}\overline{R}^{\rho\alpha}_{\gamma\mu} & \qquad \textup{for indices} \qquad (a, b, c)= (\alpha\beta\lambda)\\
\\
\overline{R}^{\lambda\alpha}_{\gamma\rho} R^{\rho\beta}_{\delta\mu} 
= R^{\lambda\beta}_{\delta\rho} \overline{R}^{\rho\alpha}_{\gamma\mu} & \qquad \textup{for indices} \qquad (a, b, c)= (\alpha\lambda\beta)
\end{array}
\right.
\eeq
and
\beq \label{yb2}
\left\{
\begin{array}{ll}
R^{\lambda\alpha}_{\gamma\nu} R^{\mu\gamma}_{\beta\rho} 
= R^{\mu\alpha}_{\gamma\rho} R^{\lambda\gamma}_{\beta\nu} & \qquad \textup{for indices} \qquad (a, b, c)= (\lambda\mu\alpha)\\
\\
\overline{R}^{\lambda\alpha}_{\gamma\nu} \overline{R}^{\mu\gamma}_{\beta\rho} 
= \overline{R}^{\mu\alpha}_{\gamma\rho}\overline{R}^{\lambda\gamma}_{\beta\nu} & \qquad \textup{for indices} \qquad (a, b, c)=(\alpha\lambda\mu)\\
\\
R^{\lambda\alpha}_{\gamma\nu} \overline{R}^{\mu\gamma}_{\beta\rho} 
= \overline{R}^{\mu\alpha}_{\gamma\rho}  R^{\lambda\gamma}_{\beta\nu} & \qquad \textup{for indices} \qquad (a, b, c)= (\lambda\alpha\mu)  
\end{array}
\right.  
\eeq
while they are trivially satisfied for indices $(a,b,c)=(\lambda,\mu,\nu)$ and $(a,b,c)=(\alpha,\beta,\gamma)$ where $\lambda,\mu,\nu\in\{1,\dots,N_1\}$ and $\alpha,\beta,\gamma\in \{1,\dots,N_2\}$.
\end{prop}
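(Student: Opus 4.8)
The plan is to exploit the fact that the matrix $\ccR$ of \eqref{rtot} is ``type-graded'': every nonzero entry relates an upper pair and a lower pair carrying the same multiset of index types (each index being of type $1$, running in $\{1,\dots,N_1\}$, or of type $2$, running in $\{1,\dots,N_2\}$), since all the genuinely type-changing entries listed in \eqref{rtot} vanish. Consequently the two cubic operators in \eqref{rvan} preserve, on $(\car)_1^{\otimes 3}$, the number of indices of each type, and the Yang--Baxter equation \eqref{rvan1} decouples into eight independent sectors indexed by the type pattern of $(a,b,c)$: the two pure patterns $(1,1,1)$, $(2,2,2)$ and the six mixed ones. First I would dispose of the two pure sectors: on the span of the $x^\lambda$ alone (resp.\ the $x^\alpha$ alone), the entries $\ccR^{\lambda\mu}_{\nu\rho}=\delta^\lambda_\rho\delta^\mu_\nu$ (resp.\ $\ccR^{\gamma\delta}_{\alpha\beta}=\delta^\gamma_\beta\delta^\delta_\alpha$) show that $\ccR$ restricts to the flip (transposition), which satisfies the braid relation identically; hence \eqref{rvan1} holds trivially for $(a,b,c)=(\lambda,\mu,\nu)$ and for $(a,b,c)=(\alpha,\beta,\gamma)$, exactly as claimed.

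For the six mixed sectors I would compute both words $(\ccR\otimes I)(I\otimes\ccR)(\ccR\otimes I)$ and $(I\otimes\ccR)(\ccR\otimes I)(I\otimes\ccR)$ entry by entry, reading the nonzero contributions off \eqref{rtot}. The mechanism is transparent: each elementary factor $\ccR\otimes I$ or $I\otimes\ccR$ acts on an adjacent pair and, by \eqref{rtot}, either swaps two indices of the same type with coefficient $1$, or moves a type-$1$ index past a type-$2$ index, producing a factor $R$ when the ordering crossed is (type $1$, type $2$) and a factor $\overline{R}$ when it is (type $2$, type $1$). Since both words realise the same net permutation -- the reversal of the three positions -- a given input pattern is sent to its reversed pattern within each sector, and matching the accumulated products of $R$'s and $\overline{R}$'s on the two sides yields a single tensorial identity per sector. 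Tracking the intermediate patterns shows that the sectors $(1,1,2)$, $(2,1,1)$, $(1,2,1)$ produce the three equations of \eqref{yb2}, while $(1,2,2)$, $(2,2,1)$, $(2,1,2)$ produce those of \eqref{yb1}; the occurrence of $R$ versus $\overline{R}$ in each is forced by whether the two mixed crossings taking place in that sector are of $(1,2)$ or of $(2,1)$ type, which is precisely why the purely unbarred, purely barred and mixed equations appear exactly as displayed.

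The computation is routine, so the only real obstacle is the bookkeeping. For each mixed sector one must (i) keep the summation index introduced by the middle crossing consistent between the two words, and (ii) relabel the free external indices so that the two sides are compared as coefficients of the same basis vector of $(\car)_1^{\otimes 3}$, since a trailing same-type flip (coefficient $1$) permutes two of the output indices before the comparison. Once these two relabellings are carried out sector by sector, both sides collapse to the quadratic expressions in \eqref{yb1} and \eqref{yb2}, giving the stated equivalence with \eqref{rvan1}. Note that the involutivity \eqref{involr} plays no role in this argument; it is a separate consequence of \eqref{real}.
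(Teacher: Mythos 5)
Your proof is correct and follows essentially the same route as the paper, which establishes the proposition precisely by inserting the block expressions \eqref{rtot} into the component form \eqref{rvan1} of the Yang--Baxter equation and sorting by the type pattern of the indices $(a,b,c)$: the type-preserving structure of $\ccR$ makes the equation decouple into the eight sectors you describe, the two pure sectors reduce to the braid relation for the flip, and the six mixed sectors yield exactly the equations \eqref{yb1} and \eqref{yb2}. Your additional observations (that both cubic words implement the reversal of the type pattern, and that involutivity \eqref{involr} is not needed) are accurate refinements of the same computation.
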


\subsection{Noncommutative products of real vector spaces} 
The classical (commutative) solution of conditions \eqref{real}, \eqref{yb1} and \eqref{yb2}, $R=R_0$ is given by
\beq\label{classic}
(R_0)^{\lambda\alpha}_{\beta\mu}=\delta^\lambda_\mu  \delta^\alpha_\beta
\eeq 
and the corresponding algebra $\ca_{R_0}$ is the algebra of complex polynomial functions on the product 
 $\IR^{N_1}\times \IR^{N_2}$. For this reason, we define the \emph{noncommutative product} 
 of $\IR^{N_1}$ and $\IR^{N_2}$  
\beq\label{ncp}
\IR^{N_1}\times_{R} \IR^{N_2}
\eeq
by (duality to) the (coordinate) algebra $\car$ for a general matrix $R$ 
that satisfies the conditions \eqref{real}, \eqref{yb1} and \eqref{yb2}.

\subsection{Regularity} 
From now we assume that the matrix $R$ of relations \eqref{alde} for the algebra $\car$ satisfies conditions \eqref{real}, \eqref{yb1} and \eqref{yb2}. The involutive condition \eqref{involr} entails that we are in the case of a permutation symmetry and thus in the more general context of Hecke symmetries of \cite{gur:1990}  and  \cite{wam:1993}.
\begin{theo}
The algebra $\car$ is Koszul of global dimension $N_1+N_2$ and is Gorenstein with polynomial growth (i.e. is Artin-Schelter).
Moreover,  the Poincar\'e series of $\car$ and of $\car^!$ are given by
\[
P_{\car}(t) = \frac{1}{(1-t)^{N_1+N_2}}  \qquad \mbox{and} \qquad P_{\car^!}(t) = (1+t)^{N_1+N_2} 
\]
thus they are the classical ones.
\end{theo}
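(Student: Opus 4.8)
The plan is to identify $\car$ with the symmetric algebra $S_\ccR(V)$ of the involutive Yang--Baxter operator $\ccR$ of \eqref{rtot} acting on $V=(\car)_1$, and $\car^!$ with the corresponding exterior algebra $\Lambda_\ccR(V)$ built from the skew relations \eqref{aldek}; the involutivity \eqref{involr} together with the Yang--Baxter equation places $\ccR$ among the involutive Hecke symmetries of \cite{gur:1990}, \cite{wam:1993}. I would establish the theorem in four steps: (i) exhibit an explicit Poincar\'e--Birkhoff--Witt basis of $\car$ through a confluent rewriting system, which computes $P_{\car}$ and proves \eqref{isovec}; (ii) deduce Koszulity; (iii) read off the global dimension and $P_{\car^!}$ from Koszul duality; (iv) verify that $\car^!$ is graded Frobenius and invoke \cite{smi:1996}, \cite{lu-pal-wu-zha:2004} to obtain the Gorenstein property.

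The substantive step, and the main obstacle, is (i). Order the generators so that every $x^\lambda_1$ precedes every $x^\alpha_2$ and orient the relations \eqref{alde} into reduction rules: reorder monomials inside each block, and rewrite each ``descent'' $x^\alpha_2 x^\lambda_1$ as $\bar R^{\lambda\alpha}_{\beta\mu}\, x^\mu_1 x^\beta_2$, so that the normal forms are the ordered monomials $x^{\lambda_1}_1\cdots x^{\lambda_p}_1\, x^{\alpha_1}_2\cdots x^{\alpha_{n-p}}_2$. Using only this single cross-rule is legitimate precisely because the two cross-relations of \eqref{alde} are mutually consistent, which is the content of the involutivity condition \eqref{real} (equivalently \eqref{involr}). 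By the Bergman Diamond Lemma it then remains to check confluence of the degree-three overlap ambiguities; those internal to a single block resolve by ordinary commutativity, while the two genuinely mixed overlaps --- the word-types with two block-$2$ and one block-$1$ generator, and with one block-$2$ and two block-$1$ generators --- resolve exactly by the Yang--Baxter conditions \eqref{yb1} and \eqref{yb2} respectively. Confluence yields the PBW basis, hence the graded vector-space isomorphism \eqref{isovec} and $P_{\car}(t)=(1-t)^{-(N_1+N_2)}$, together with $\dim(\car)_n=\binom{N_1+N_2+n-1}{n}$ and therefore polynomial growth.

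A quadratic algebra admitting such a confluent quadratic (Gr\"obner) basis is Koszul by Priddy's PBW theorem, which gives step (ii). For step (iii) I would use the numerical Koszul-duality identity $P_{\car}(t)\,P_{\car^!}(-t)=1$, valid for any Koszul algebra, to obtain $P_{\car^!}(t)=(1+t)^{N_1+N_2}$, establishing \eqref{disovec}; in particular $\car^!$ is finite-dimensional with top degree $N_1+N_2$ and $\dim(\car^!)_{N_1+N_2}=1$. Since $\car$ is Koszul, the minimal free resolution of the trivial module is the Koszul complex with terms $\car\otimes(\car^!_n)^\ast$, so the global dimension equals the largest $n$ with $(\car^!)_n\neq 0$, namely $N_1+N_2$.

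Finally, for the Gorenstein (Artin--Schelter) property I would use that, for a Koszul algebra, being AS--Gorenstein is equivalent to the Koszul dual $\car^!$ being a graded Frobenius algebra \cite{smi:1996}, \cite{lu-pal-wu-zha:2004}. The Poincar\'e polynomial $(1+t)^{N_1+N_2}$ is palindromic with one-dimensional top component, so it remains to verify that the multiplication pairing $(\car^!)_k\times(\car^!)_{N_1+N_2-k}\to(\car^!)_{N_1+N_2}\cong\IC$ is nondegenerate. On the PBW basis of ordered exterior monomials the product of a monomial with its complement equals a nonzero multiple of the top form, the nonvanishing of the coefficient following from the invertibility of $R$ guaranteed by \eqref{real}; hence the pairing is nondegenerate and $\car^!$ is Frobenius, so $\car$ is Gorenstein and thus Artin--Schelter regular. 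I note that steps (ii)--(iv) are exactly the structural conclusions of the Gurevich--Wambst theory of involutive Hecke symmetries, so that once step (i) certifies $\ccR$ as such a symmetry of classical type one may alternatively invoke \cite{gur:1990}, \cite{wam:1993} directly.
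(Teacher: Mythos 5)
Your overall architecture is sound and, in spirit, close to the paper's own proof: both certify the block matrix $\ccR$ of \eqref{rtot} as an involutive solution of the Yang--Baxter equation and rest the Poincar\'e series on the basis of ordered monomials, whose existence the paper justifies exactly by your step (i) (its remark that ``the Yang--Baxter equation implies that there is no ambiguities'' is your Diamond Lemma argument in compressed form, and your matching of the two mixed overlap types with \eqref{yb1} and \eqref{yb2} is the correct bookkeeping). Where you genuinely diverge is in steps (ii)--(iv). The paper obtains Koszulity from the Gurevich--Wambst contracting homotopy: on $\ca_R\otimes\wedge^{\bullet}_{R}$ there are two differentials $\delta$ and $\dd$ satisfying $(\delta\dd+\dd\delta)\alpha=(k+l)\alpha$ for $\alpha\in(\ca_R)_k\otimes\wedge^l_R$, and this identity does double duty, since the Gorenstein property is then read off by applying $\Hom_{\car}(\,\cdot\,,\car)$ to the same Koszul complex and computing $\Ext^\bullet_{\car}(\IC,\car)$. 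Your substitutes --- Priddy's PBW theorem for Koszulity, and the identity $P_{\car}(t)\,P_{\car^!}(-t)=1$ for the dual series --- are correct and more elementary, but they buy you less: they leave the Gorenstein property as a separate problem, which you propose to settle through the Frobenius criterion for $\car^!$.

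That is where the one genuine gap sits. Your nondegeneracy argument for the pairing $(\car^!)_k\times(\car^!)_{N_1+N_2-k}\to(\car^!)_{N_1+N_2}$ does not work as stated, for two reasons. First, in the deformed exterior algebra the Gram matrix of this pairing in the basis of ordered monomials is not anti-diagonal: because the cross relations \eqref{aldek} replace a single transposition by a sum over indices, a monomial generally pairs nontrivially with many monomials besides its set-theoretic complement, so proving that each complement-pairing is nonzero would not suffice for nondegeneracy. Second, the pairing coefficients are contractions of products of entries of $R$ with the indices redistributed between row and column (a ``reshuffle'', i.e.\ a partial transpose, of $R$), and invertibility of a matrix is not preserved under reshuffling; so nonvanishing does not follow from \eqref{real}. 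Concretely, already for $N_1=N_2=2$ the pairing of $\theta^1_\lambda\theta^2_\alpha$ with $\theta^1_\mu\theta^2_\beta$ equals $\pm R^{\lambda'\beta'}_{\alpha\mu}$, where the primes denote complementary indices: this is a reshuffled $R$, whose invertibility is a condition logically independent of the invertibility of $R$ itself. What you actually need here is Gurevich's theorem that the exterior algebra of an even involutive symmetry (finite-dimensional, with one-dimensional top component --- which your step (i) does establish) is a Frobenius algebra, i.e.\ precisely the fallback to \cite{gur:1990}, \cite{wam:1993} that you mention in your last sentence, or else the paper's route through the dualized Koszul complex. With that substitution your proof closes; without it, step (iv) is not proved.
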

This follows from the results of \cite{gur:1990} (see also \cite{wam:1993}) together with the particular structure of Relations \eqref{alde}. Indeed the Yang-Baxter equation implies that there is no ambiguities when one uses \eqref{alde} to convert to the other order the $x^{a_1}\dots x^{a_n}$ for say $a_1\leq a_2\leq\dots\leq a_n$.

Concerning the Koszulity and the Gorenstein property with global dimension $N_1+N_2$, let us sketch briefly the method of \cite{gur:1990} and of \cite{wam:1993}.

We refer to Appendix A for the relevant notions that we need. 
To the involutive matrix  $\ccR$  are associated two quadratic algebras. The first one  is the generalization of the symmetric algebra and is generated by the elements $x^a$ with relations $x^ax^b=\ccR^{ab}_{cd}x^c x^d$ and thus coincides here with $\ca_R$. The second one $\wedge_R$, which is the generalization of the exterior algebra, is generated by the elements $dx^a$ with relations $dx^a dx^b+\ccR^{ab}_{cd} dx^c dx^d=0$ and coincides with the dual $(\ca^!_R)^\ast$ of the Koszul dual $\ca^!_R$ of $\ca_R$ as graded vector space.

On the free graded left $\ca_R$-module
$\ca_R \otimes \wedge^{\bullet}_{R}$
one has two differentials, the Koszul differential 
$$
\delta :  (\ca_{R})_k \otimes \wedge^l_{R} \to (\ca_{R})_{k+1} \otimes \wedge^{l-1}_{R} 
$$
and the generalization of the exterior differential 
$$
\dd :  (\ca_{R} )_k \otimes \wedge^l_{R} \to (\ca_{R})_{k-1} \otimes \wedge^{l+1}_{R} 
$$
which satisfy the classical relation
$$
( \delta  \dd + \dd \delta ) \alpha = (k+l) \alpha \qquad \mbox{for} \quad \alpha \in(\ca_{R} )_k  \otimes \wedge^l_{R} . 
$$

\noindent
This implies both the Koszulity of $\ca_R$ and a generalization of the Poincar\'e lemma. 
Then the elements $(x^{a_1} \cdots x^{a_p})$ for $a_1 \leq a_2 \cdots \leq a_p$ and $p\in \IN$,  
form a homogeneous basis of the graded vector space $\car$ which implies that its Poincar\'e series is given by 
$$
P_{\car}(t) := \sum_{p\in\IN} \dim_{\IC}((\car)_p) \, t^p= \frac{1}{(1-t)^{N_1+N_2}}  
$$
which is the classical one. This is also the case for $\ca^!_R$ where one has
\[
P_{\ca^!_R}(t)=P_{\wedge_{R}}(t)=(1+t)^{N_1+N_2}=(P_{\ca_R}(-t))^{-1}
\]
as expected by the Koszulity property. A homogeneous basis of  $\ca^!_R$ is given by the  $\theta_{a_1}\dots \theta_{a_p}$ with $a_1<a_2<\dots<a_p$ and $p\in \{1,\dots, N_1+N_2\}$.

As a consequence the ground field $\IC$ has projective dimension  $N_1+N_2$ which coincides with the global dimension
$$
\mbox{gl}(\car) = N_1+N_2 ,
$$
of $\car$ and coincides also with its Hochschild dimension \cite{car:1958}, \cite{ber:2005} .

Next, by applying the functor $\Hom_{\car}(\, \cdot  , \car)$ to the Koszul chain complex of free left 
$\car$-modules $(\car \otimes (\car^!)^*_\bullet, \delta)$, one obtains a cochain complex of right 
$\car$-modules
the cohomology of which is $\Ext^\bullet_\car(\IC,\car)$ by definition.  Finally the Gorenstein property
$$
\Ext^n_\car(\IC,\car) = \delta^n_{N_1+N_2} \, \IC 
$$
follows from the above results.
 
\subsection{Noncommutative products of Euclidean spaces}

We now consider the Euclidean spaces 
$\IR^{N_1}$ and $\IR^{N_2}$.

With the generators labelled as before by
$x^a = (x^\lambda=x^\lambda_1, \,  x^{\alpha+N_1}=x^\alpha_2)$
for index $a\in \{1,2, \dots, N_1+N_2\}$ and the relations as in \eqref{aldeb} (or \eqref{alde})
we have the following. 
\begin{theo} 
The following conditions $\mathrm{(i)}$ $\mathrm{(ii)}$ and $\mathrm{(iii)}$ are equivalent:
\begin{align}\label{cerad}
\mathrm{(i)} & \qquad\qquad \sum^{N_1+N_2}_{a=1} (x^a)^2=\sum^{N_1}_{\lambda=1} (x^\lambda_1)^2 + \sum^{N_2}_{\alpha=1} (x^\alpha_2)^2 \qquad  \mbox{is central in} \quad \car,  \nn \\
\mathrm{(ii)} & \qquad\qquad \sum^{N_1}_{\lambda=1}(x^\lambda_1)^2 \quad \mbox{and} \quad \sum^{N_2}_{\alpha=1}(x^\alpha_2)^2
\qquad  \mbox{are in the center of} \quad  \car, \nn \\
\mathrm{(iii)} & \qquad\qquad \sum^{N_1}_{\lambda=1} R^{\lambda\gamma}_{\beta\nu} 
R^{\lambda\beta}_{\alpha \mu}= \delta^\gamma_\alpha \delta_{\mu\nu} \qquad \mbox{and} \qquad  \sum^{N_2}_{\alpha=1} R^{\lambda\alpha}_{\beta\rho} R^{\rho\alpha}_{\gamma\mu}=\delta^\lambda_\mu \delta_{\beta\gamma}. 
\end{align}
\begin{proof}
Since any two components $x_1^\lambda$ and $x_1^\mu$ commute among themselves, and the same is true for any two components $x_2^\alpha$ and $x_2^\beta$, the equivalence of points $\mathrm{(i)} $ and $\mathrm{(ii)} $ follows.
For the rest one just computes using the defining relations \eqref{alde}
$$
\sum_\lambda (x_1^\lambda)^2 x_2^\gamma = \sum_\lambda x_1^\lambda x_1^\lambda x_2^\gamma = 
\sum_\lambda x_1^\lambda R^{\lambda \gamma}_{\beta \nu} x_2^\beta x_1^\nu 
=\sum_\lambda R^{\lambda \gamma}_{\beta \nu} R^{\lambda \beta}_{\alpha \mu}  \,\, 
x_2^\alpha x_1^\mu x_1^\nu .
$$
Asking that the right-hand side be $\sum_\lambda x_2^\gamma (x_1^\lambda)^2$ yields the first condition in point 
$\mathrm{(iii)}$. The second one is obtained along similar lines. 
\end{proof}
\end{theo}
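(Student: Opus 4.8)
The plan is to establish the chain $\mathrm{(i)}\Leftrightarrow\mathrm{(ii)}\Leftrightarrow\mathrm{(iii)}$ in two independent steps, using throughout that every element in question is homogeneous of degree $2$, so that its centrality in the graded algebra $\car$ is equivalent to its commuting with the degree-one generators $x^\lambda_1$ and $x^\alpha_2$. Write $Q_1=\sum_\lambda (x^\lambda_1)^2$ and $Q_2=\sum_\alpha (x^\alpha_2)^2$ for the two quadratic elements.

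For $\mathrm{(i)}\Leftrightarrow\mathrm{(ii)}$ I would first record the trivial half: by the relations \eqref{alde} the $x^\lambda_1$ commute among themselves and the $x^\alpha_2$ commute among themselves, so $Q_1$ automatically commutes with every $x^\mu_1$ and $Q_2$ automatically commutes with every $x^\beta_2$. The implication $\mathrm{(ii)}\Rightarrow\mathrm{(i)}$ is then immediate. For the converse the key point is that the two obstructions decouple: one has $[\,Q_1+Q_2,\,x^\gamma_2\,]=[\,Q_1,\,x^\gamma_2\,]$ since $[\,Q_2,\,x^\gamma_2\,]=0$, and symmetrically $[\,Q_1+Q_2,\,x^\mu_1\,]=[\,Q_2,\,x^\mu_1\,]$. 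Hence if $Q_1+Q_2$ is central then $Q_1$ commutes with all the $x^\gamma_2$ (and already with all the $x^\mu_1$), so $Q_1$ is central, and likewise $Q_2$; this is $\mathrm{(i)}\Rightarrow\mathrm{(ii)}$.

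For $\mathrm{(ii)}\Leftrightarrow\mathrm{(iii)}$ I would compute $Q_1\,x^\gamma_2$ directly, applying the cross-relation $x^\lambda_1 x^\alpha_2=R^{\lambda\alpha}_{\beta\mu}\,x^\beta_2 x^\mu_1$ twice in order to carry both factors $x^\lambda_1$ of $Q_1$ across $x^\gamma_2$; this produces $\sum_\lambda R^{\lambda\gamma}_{\beta\nu}R^{\lambda\beta}_{\alpha\mu}\,x^\alpha_2 x^\mu_1 x^\nu_1$. Comparing with $x^\gamma_2 Q_1=\sum_\sigma x^\gamma_2 x^\sigma_1 x^\sigma_1$ and reading off coefficients yields the first condition of $\mathrm{(iii)}$. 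The second condition is obtained along the same lines by computing $Q_2\,x^\mu_1$ with the conjugate relation $x^\alpha_2 x^\lambda_1=\overline R^{\lambda\alpha}_{\beta\mu}\,x^\mu_1 x^\beta_2$, which after taking complex conjugates (legitimate since $Q_2$ and $x^\mu_1$ are hermitian) gives $\sum_\alpha R^{\lambda\alpha}_{\beta\rho}R^{\rho\alpha}_{\gamma\mu}=\delta^\lambda_\mu\delta_{\beta\gamma}$. Both directions follow from this single computation read forwards and backwards.

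The step I expect to be the main obstacle is the passage from ``centrality'' to the precise algebraic identities. Comparing coefficients is only legitimate once one knows that the ordered monomials $x^\alpha_2 x^\mu_1 x^\nu_1$ with $\mu\le\nu$ form a basis of the degree-three part of $\car$, which is exactly the Poincar\'e--Birkhoff--Witt content supplied by the preceding regularity theorem (Koszulity together with the classical Poincar\'e series). A related subtlety is that, because $x^\mu_1$ and $x^\nu_1$ commute, centrality of $Q_1$ a priori constrains only the $(\mu,\nu)$-symmetric part of $\sum_\lambda R^{\lambda\gamma}_{\beta\nu}R^{\lambda\beta}_{\alpha\mu}$; to recover the full identity written in $\mathrm{(iii)}$ one should check, using the reality relation \eqref{real} together with the Yang--Baxter relations \eqref{yb1}--\eqref{yb2}, that this matrix is already symmetric under $\mu\leftrightarrow\nu$, so that the symmetrized condition coincides with $\mathrm{(iii)}$ as stated.
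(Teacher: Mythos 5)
Your proof is correct and follows essentially the same route as the paper's: the same decoupling argument for $\mathrm{(i)}\Leftrightarrow\mathrm{(ii)}$ (the $x^\lambda_1$ commute among themselves, the $x^\alpha_2$ among themselves, so the two commutators split), and the same double application of the cross-relations \eqref{alde} for $\mathrm{(ii)}\Leftrightarrow\mathrm{(iii)}$. Where you go beyond the printed proof is in the two subtleties you flag, and both are genuine: the paper silently compares coefficients, which indeed requires the linear independence of the ordered monomials (the PBW content of the regularity theorem, valid here since the standing assumptions \eqref{real}, \eqref{yb1}, \eqref{yb2} are in force), and it silently passes from the $(\mu,\nu)$-symmetrized identity, which is all that centrality of $\sum_\lambda(x^\lambda_1)^2$ can give, to the unsymmetrized identity $\mathrm{(iii)}$. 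The check you defer does hold, and needs only the Yang--Baxter relations: setting $\mu=\lambda$ in the first relation of \eqref{yb2} gives, after renaming the indices, $\sum_\beta R^{\lambda\gamma}_{\beta\nu}R^{\lambda\beta}_{\alpha\mu}=\sum_\beta R^{\lambda\gamma}_{\beta\mu}R^{\lambda\beta}_{\alpha\nu}$ for each fixed $\lambda$, so the first matrix is already symmetric under $\mu\leftrightarrow\nu$; similarly, setting $\beta=\alpha$ and summing over $\alpha$ in the first relation of \eqref{yb1} shows that $\sum_\alpha R^{\lambda\alpha}_{\beta\rho}R^{\rho\alpha}_{\gamma\mu}$ is symmetric under $\beta\leftrightarrow\gamma$. (For the second condition you could also avoid the conjugation step by computing $x^\lambda_1\sum_\alpha(x^\alpha_2)^2$ with the unbarred relation, but your route via $\overline{R}$ and complex conjugation is equally valid.) With those two one-line verifications inserted, your argument is complete and is in fact more careful than the paper's own proof.
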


We take the matrix $R$ to satisfies \eqref{cerad} also and define the \emph{noncommutative product of the Euclidean spaces} $\IR^{N_1}$ with $\IR^{N_2}$ 
to be dual of the algebra $\car$ with these additional conditions. 
Clearly, the \eqref{cerad} are satisfied by the classical $R=R_0$. The algebra
$\car$ generalizes the algebra of polynomial functions on the product $\IR^{N_1} \times \IR^{N_2}$. It is clear that so defined, the noncommutative product of the Euclidean spaces $\mathbb R^{N_1}$ and $\mathbb R^{N_2}$ is a noncommutative Euclidean space $\mathbb R^{N_1+N_2}$ in the sense of \S 2.3.
 
The centrality conditions in \eqref{cerad} taken together with the reality conditions \eqref{real} leads to the following conditions on the matrix $R^{\lambda\alpha}_{\beta\mu}$. 
\begin{prop}
Relations \eqref{real} and \eqref{cerad} are equivalent to
\beq\label{eucl0}
R^{\lambda\beta}_{\alpha\mu}=R^{\mu\alpha}_{\beta\lambda}
=\overline{R}^{\mu\beta}_{\alpha\lambda}=(R^{-1})^{\beta\mu}_{\lambda\alpha} 
\eeq
which in turn implies that Relations \eqref{yb1} and \eqref{yb2} reduce to
\beq\label{eucl1}
R^{\lambda\beta}_{\alpha\rho} R^{\rho\delta}_{\gamma\mu} = R^{\lambda\delta}_{\gamma\rho}R^{\rho \beta}_{\alpha\mu} \qquad \mbox{and} \qquad 
R^{\lambda\beta}_{\gamma\nu} R^{\mu\gamma}_{\alpha\rho} = 
R^{\mu\beta}_{\gamma\rho} R^{\lambda\gamma}_{\alpha\nu}
\eeq
that is the first relation of \eqref{yb1} and the first relation of \eqref{yb2}.
\end{prop}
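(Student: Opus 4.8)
The plan is to split the statement into two independent parts: first the equivalence of the reality and centrality conditions \eqref{real}--\eqref{cerad} with the chain of identities \eqref{eucl0}, and then the collapse of the six Yang--Baxter relations \eqref{yb1}--\eqref{yb2} to the two relations \eqref{eucl1}. Throughout I read $R$ as the off-diagonal block of the involutive matrix $\ccR$, i.e. as the isomorphism $V_1\otimes V_2\to V_2\otimes V_1$ sending $x_1^\lambda x_2^\alpha$ to $R^{\lambda\alpha}_{\beta\mu}\, x_2^\beta x_1^\mu$, where $V_1,V_2$ denote the spans of the $x_1^\lambda$ and of the $x_2^\alpha$ in $(\ca_R)_1$, and with $\overline R$ the conjugate block in the opposite direction. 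This is the language in which the four quantities in \eqref{eucl0} acquire a meaning.

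For the equivalence I would first isolate the last equality of \eqref{eucl0}. By \eqref{real} (equivalently by the involutivity \eqref{involr} of $\ccR$) one has $\overline R\, R=I$ on the mixed block, hence $\overline R=R^{-1}$ as maps between $V_1\otimes V_2$ and $V_2\otimes V_1$; reading off matrix elements in the convention above gives exactly $\overline R^{\mu\beta}_{\alpha\lambda}=(R^{-1})^{\beta\mu}_{\lambda\alpha}$, so that \eqref{real} is equivalent to the third equality in \eqref{eucl0}. It then remains to show that, in the presence of \eqref{real}, the two centrality identities \eqref{cerad} are equivalent to the first two equalities $R^{\lambda\beta}_{\alpha\mu}=R^{\mu\alpha}_{\beta\lambda}=\overline R^{\mu\beta}_{\alpha\lambda}$. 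The idea is that each centrality identity is a quadratic contraction $\sum R\,R=\delta$ in which exactly one factor may be inverted through $\overline R=R^{-1}$: multiplying \eqref{cerad} on the appropriate side by $R^{-1}=\overline R$ and cancelling by means of \eqref{real}, the quadratic identities collapse to the linear pointwise identities relating $R$ and $\overline R$, namely the reflection symmetry $R^{\lambda\beta}_{\alpha\mu}=R^{\mu\alpha}_{\beta\lambda}$ and the partial Hermiticity $R^{\mu\alpha}_{\beta\lambda}=\overline R^{\mu\beta}_{\alpha\lambda}$. Conversely, substituting \eqref{eucl0} into \eqref{cerad} turns one factor of each product into $\overline R$ and reduces it, via \eqref{real}, to the Kronecker right-hand side, so that the implication runs both ways.

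For the second part I observe that combining the first and third members of \eqref{eucl0} yields the single substitution rule $\overline R^{ab}_{cd}=R^{db}_{ca}$, the exchange of the two type-$1$ indices, which expresses every conjugated matrix element through an unconjugated one. Feeding this rule into the second and third equations of \eqref{yb1} and of \eqref{yb2} eliminates all bars, and after renaming the free indices by means of the reflection symmetry $R^{\lambda\beta}_{\alpha\mu}=R^{\mu\alpha}_{\beta\lambda}$ (the first equality of \eqref{eucl0}) each of these four equations becomes identical to the first equation of its group. Hence \eqref{yb1} collapses to its first line and \eqref{yb2} to its first line, and these two surviving relations are precisely \eqref{eucl1}. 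A direct check of the second equation of \eqref{yb1} already exhibits the pattern: after the substitution its two sides read $\sum_\rho R^{\mu\beta}_{\delta\rho}R^{\rho\alpha}_{\gamma\lambda}$ and $\sum_\rho R^{\mu\alpha}_{\gamma\rho}R^{\rho\beta}_{\delta\lambda}$, which is the first equation with $\lambda$ and $\mu$ interchanged.

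The main obstacle is the middle step of the first part: proving that the two centrality contractions \eqref{cerad} encode exactly the two pointwise symmetries, and nothing more. The difficulty is that these identities are \emph{partial} contractions, each summing over a single type-$1$ (or type-$2$) index in a same-side position rather than over the inner indices appearing in \eqref{real}. Matching them to an orthogonality statement $R^{-1}=R^{T}$ for the Euclidean metrics, and then combining it with $\overline R=R^{-1}$ to produce the identity $\overline R=R^{T}$, requires careful tracking of index positions and of which of the two conditions \eqref{cerad} yields which symmetry. Everything else is bookkeeping: once the substitution rule $\overline R^{ab}_{cd}=R^{db}_{ca}$ is available, the second part is purely mechanical.
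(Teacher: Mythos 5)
Your proof is correct and takes essentially the same route as the paper's: use \eqref{real} to identify $\overline R$ with $R^{-1}$, read each centrality contraction in \eqref{cerad} as saying that a partial transpose of $R$ equals $R^{-1}$ (the paper phrases your ``multiply by $R^{-1}$ and cancel'' as ``comparison with \eqref{real} up to a transposition of indices''), and then use the resulting substitution rule $\overline{R}^{\lambda\alpha}_{\beta\mu}=R^{\mu\alpha}_{\beta\lambda}$ to collapse the barred Yang--Baxter equations onto the unbarred ones. The only differences are inessential: you actually carry out the reduction of \eqref{yb1}--\eqref{yb2}, which the paper dismisses as evident, and your attribution of which centrality condition yields which individual equality of \eqref{eucl0} is permuted relative to the paper's (each condition directly gives equality with the fourth member $(R^{-1})^{\beta\mu}_{\lambda\alpha}$), which is immaterial since the conditions are used jointly.
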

\begin{proof}
We know from \eqref{real} that $(R^{-1})^{\beta\mu}_{\lambda\alpha}
=\overline{R}^{\mu\beta}_{\alpha\lambda}$. When comparing the first sum in the point (iii) 
of \eqref{cerad} with \eqref{real}, we see there is a `transposition'in the indices $\lambda, \mu$
and this leads to $R^{\lambda\beta}_{\alpha\mu}=(R^{-1})^{\beta\mu}_{\lambda\alpha}$. Similarly,
a comparison of the second sum in the point (iii) of \eqref{cerad} with \eqref{real}, shows 
a `transposition' in the indices $\alpha, \beta$ and this gives
$R^{\mu\alpha}_{\beta\lambda}=(R^{-1})^{\beta\mu}_{\lambda\alpha}$. 
The fact that then the relations \eqref{yb1} and \eqref{yb2} reduce to the two in \eqref{eucl1} is evident. 
\end{proof}
Finally the initial relations \eqref{alde} define (the algebra of) a noncommutative product of the Euclidean spaces $\mathbb R^{N_1}$ and $\mathbb R^{N_2}$  if and only if the matrix $R^{\lambda\alpha}_{\beta\mu}$ satisfy 
relations \eqref{eucl0} and \eqref{eucl1}.

 \subsection{Noncommutative spheres and products of spheres}
 With the quadratic elements $(x_1,x_1) = \sum^{N_1}_{\lambda=1}(x^\lambda_1)^2$ and 
$(x_2,x_2) = \sum^{N_2}_{\alpha=1}(x^\alpha_2)^2$ of $\car$ being central, one may consider the quotient algebra
$$
\car / (\{  (x_1,x_1)-\bbbone, (x_2,x_2)-\bbbone\})
$$
which defines by duality the noncommutative product 
$$
S^{N_1-1} \times_R S^{N_2-1}
$$
of the classical spheres $S^{N_1-1}$ and $S^{N_2-1}$. Indeed, for $R=R_0$, the above quotient is the restriction to $S^{N_1-1}\times S^{N_2-1}$ of the polynomial functions on 
$\IR^{N_1+N_2}$. 

Furthermore, with the central quadratic element $(x,x) = \sum^{N_1+N_2}_{a=1} (x^a)^2= (x_1,x_1)+(x_2,x_2)$,
one may also consider the quotient of $\car$
$$
\car/( (x,x)-\bbbone).
$$
This defines (by duality) the noncommutative $(N_1+N_2-1)$-sphere 
$$
S^{N_1+N_2-1}_R
$$
which is a noncommutative spherical manifold in the sense of \cite{ac-lan:2001}, \cite{ac-mdv:2002a} as explained below.

\section{Structure of the matrix $R$}

In this section we analyze the consequences of Proposition 3.4 for the structure of $R$.

\subsection{Consequences of the equations (3.16)} The consequences of the equations \eqref{eucl0} are summarized in the following proposition.
\begin{lemm}\label{ST}
The $R^{\lambda\alpha}_{\beta\mu}$ satisfy \eqref{eucl0} if and only if
\beq \label{eqSA}
R^{\lambda\alpha}_{\beta\mu}=S^{\lambda\alpha}_{\mu\beta}+iT^{\lambda\alpha}_{\mu\beta}
\eeq
with
\beq\label{eqS}
S^{\lambda\alpha}_{\mu\beta}=S^{\lambda\beta}_{\mu\alpha}=S^{\mu\alpha}_{\lambda\beta}=\bar S^{\lambda\alpha}_{\mu\beta}
\eeq
\beq\label{eqT}
T^{\lambda\alpha}_{\mu\beta}=-T^{\lambda\beta}_{\mu\alpha}=-T^{\mu\alpha}_{\lambda\beta}=\bar T^{\lambda\alpha}_{\mu\beta}
\eeq
and
\beq\label{eqino}
S^2+T^2=\bbbone_{N_1}\otimes \bbbone_{N_2}
\eeq
\beq\label{eqc}
[T,S]=0
\eeq
where $S$ and $T$ are the corresponding endomorphisms of $\mathbb R^{N_1}\otimes \mathbb R^{N_2}$.
\end{lemm}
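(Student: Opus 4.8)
The plan is to prove both implications at once by decomposing $R$ into real and imaginary parts and matching, index by index, the four equalities packaged in \eqref{eucl0} against the defining properties of $S$ and $T$. Concretely, I would set $S^{\lambda\alpha}_{\mu\beta} := \mathrm{Re}\, R^{\lambda\alpha}_{\beta\mu}$ and $T^{\lambda\alpha}_{\mu\beta} := \mathrm{Im}\, R^{\lambda\alpha}_{\beta\mu}$, so that \eqref{eqSA} holds by construction and the reality conditions $S=\bar S$, $T=\bar T$ (the last equalities in \eqref{eqS} and \eqref{eqT}) are automatic. The one point to keep straight throughout is the reordering of the lower indices: $R$ carries them in the order $(\beta,\mu)$, an $N_2$ index then an $N_1$ index, whereas $S$ and $T$ are set up as honest endomorphisms of $\mathbb R^{N_1}\otimes\mathbb R^{N_2}$ with lower indices ordered $(\mu,\beta)$. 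Carrying this $(\beta,\mu)\leftrightarrow(\mu,\beta)$ transposition through every substitution is where essentially all the care is needed.

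Next I would separate \eqref{eucl0} into its genuinely independent pieces. The equality $R^{\lambda\beta}_{\alpha\mu}=R^{\mu\alpha}_{\beta\lambda}$ expresses invariance of $R$ under simultaneously swapping its two $N_1$ indices and its two $N_2$ indices; the equality $R^{\mu\alpha}_{\beta\lambda}=\overline R^{\mu\beta}_{\alpha\lambda}$ says that swapping the two $N_2$ indices amounts to complex conjugation; and the final equality with $(R^{-1})^{\beta\mu}_{\lambda\alpha}$ is, in view of the standing relation \eqref{real}, exactly the invertibility statement $\overline R^{\mu\beta}_{\alpha\lambda}=(R^{-1})^{\beta\mu}_{\lambda\alpha}$. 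Substituting $R^{\lambda\alpha}_{\beta\mu}=S^{\lambda\alpha}_{\mu\beta}+iT^{\lambda\alpha}_{\mu\beta}$ into the $N_2$-swap equality, written generically as $R^{\lambda\alpha}_{\beta\mu}=\overline{R^{\lambda\beta}_{\alpha\mu}}$, and separating real and imaginary parts yields at once $S^{\lambda\alpha}_{\mu\beta}=S^{\lambda\beta}_{\mu\alpha}$ and $T^{\lambda\alpha}_{\mu\beta}=-T^{\lambda\beta}_{\mu\alpha}$, the first equalities in \eqref{eqS} and \eqref{eqT}. Composing the $N_2$-swap equality with the double-swap equality produces the analogous relation $R^{\lambda\alpha}_{\beta\mu}=\overline{R^{\mu\alpha}_{\beta\lambda}}$, whose real and imaginary parts give $S^{\lambda\alpha}_{\mu\beta}=S^{\mu\alpha}_{\lambda\beta}$ and $T^{\lambda\alpha}_{\mu\beta}=-T^{\mu\alpha}_{\lambda\beta}$, the second equalities in \eqref{eqS} and \eqref{eqT}.

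For the remaining conditions I would feed the decomposition into \eqref{real}. Writing $\overline R^{\lambda\alpha}_{\beta\mu}=S^{\lambda\alpha}_{\mu\beta}-iT^{\lambda\alpha}_{\mu\beta}$ and $R^{\mu\beta}_{\gamma\nu}=S^{\mu\beta}_{\nu\gamma}+iT^{\mu\beta}_{\nu\gamma}$ and contracting over the repeated $\mu\in\{1,\dots,N_1\}$ and $\beta\in\{1,\dots,N_2\}$, the sum over these middle indices is precisely composition of endomorphisms of $\mathbb R^{N_1}\otimes\mathbb R^{N_2}$; since $S$ and $T$ are real, the whole expression collects into $(S^2+T^2)^{\lambda\alpha}_{\nu\gamma}+i(ST-TS)^{\lambda\alpha}_{\nu\gamma}$. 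Equating this to the real identity $\delta^\lambda_\nu\delta^\alpha_\gamma=(\bbbone_{N_1}\otimes\bbbone_{N_2})^{\lambda\alpha}_{\nu\gamma}$ and again separating real and imaginary parts forces $S^2+T^2=\bbbone_{N_1}\otimes\bbbone_{N_2}$ and $ST=TS$, which are \eqref{eqino} and \eqref{eqc}. Note that this step uses only the reality of $S$ and $T$, not the symmetry relations, so the two groups of conditions come out cleanly decoupled.

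Finally, the converse requires no new work: reading each of the three computations backwards, the symmetry relations \eqref{eqS} and \eqref{eqT} reconstitute the double-swap and the $N_2$-swap-conjugation equalities, while \eqref{eqino} together with \eqref{eqc} rebuild \eqref{real} and hence, via the same identification, the equality with $(R^{-1})^{\beta\mu}_{\lambda\alpha}$; assembling these recovers the full chain \eqref{eucl0}. The only genuine obstacle is bookkeeping: one must track consistently which index pair lives in $\mathbb R^{N_1}$ and which in $\mathbb R^{N_2}$, and carry the lower-index transposition through every substitution so that $S$ and $T$ compose as genuine endomorphisms. Once a careful index dictionary is fixed, each implication reduces to a one-line separation into real and imaginary parts.
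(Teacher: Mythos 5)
Your proof is correct and takes essentially the same route as the paper's: where the paper first extracts $S$ and $\tilde T=iT$ as the parts of $\hat R$ that are symmetric, respectively antisymmetric, under the pair swaps and then uses the conjugation equalities to show $S$ and $T$ are real, you define $S,T$ as the real and imaginary parts of $\hat R$ and use the very same equalities to derive the symmetries, and both arguments obtain \eqref{eqino} and \eqref{eqc} identically by translating the inverse relation \eqref{real} into $(S-iT)(S+iT)=\bbbone_{N_1}\otimes\bbbone_{N_2}$. The only (welcome) difference is that you spell out the converse explicitly, which the paper leaves implicit since each step is an equivalence.
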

\noindent \underbar{Proof.} The equations $R^{\lambda\alpha}_{\beta\mu}=R^{\mu\beta}_{\alpha\lambda}$ imply that one has  $R^{\lambda\alpha}_{\beta\mu}=S^{\lambda\alpha}_{\mu\beta}+ \tilde T^{\lambda\alpha}_{\mu\beta}$ with $S$ symmetric in $(\lambda,\mu)$ and in $(\alpha,\beta)$ and $\tilde T$ antisymmetric in $(\lambda,\mu)$ and in $(\alpha,\beta)$, while the equations $R^{\lambda\alpha}_{\beta\mu}=\bar R^{\mu\alpha}_{\beta\lambda}$ imply that the $S$ are real and the $\tilde T$ are imaginary, so $\tilde T=iT$ with $T$ real. Finally the equations $\bar R^{\mu\beta}_{\alpha\lambda}=(R^{-1})^{\beta\mu}_{\lambda\alpha}$ reads $(S+iT)(S-iT)=\bbbone_{N_1}\otimes \bbbone_{N_2}$ which implies $S^2+T^2=\bbbone_{N_1}\otimes \bbbone_{N_2}$ and $[T,S]=0$.  $\square$

\subsection{Consequences of the equations (3.17)} It follows from \eqref{eqS} that one has 
\beq\label{AB}
S=\sum_{r=1}^p A_r\otimes B_r
\eeq
where the $A_r$ are real $N_1\times N_1$ symmetric matrices and the $B_r$ are real $N_2\times N_2$ symmetric matrices. Furthermore, one can assume that the $A_r$ are linearly independent and that the $B_r$ are also linearly independent. Similarly, it follows from \eqref{eqT} that one has
\beq\label{CD}
T=\sum_{a=1}^q C_a\otimes D_a
\eeq
where the $C_a$ are real $N_1\times N_1$ antisymmetric matrices and the $D_a$ are real $N_2\times N_2$ antisymmetric matrices and one can assume that the $C_a$ are linearly independent as well as the $D_a$.

Thus by setting
\beq\label{chR}
\hat R^{\lambda\alpha}_{\mu\beta}= R^{\lambda\alpha}_{\beta\mu}
\eeq 
one has the representation
\beq\label{ABCD}
\hat R=\sum_r A_r\otimes B_r+i\sum_aC_a\otimes D_a
\eeq
for the endomorphism $\hat R$ of $\mathbb R^{N_1}\otimes \mathbb R^{N_2}$ 
with the $A_r$ linearly independent as well as the $B_r$, the $C_a$ and the $D_a$. Moreover, this representation \eqref{ABCD} for $R$ which is implied by \eqref{eqS} and \eqref{eqT} is in fact equivalent to \eqref{eqS} and \eqref{eqT} as easily seen.
\begin{lemm}\label{Qad}
The relations \eqref{eucl1} are equivalent to
\beq\label{AC}
[A_r,A_s]=0,\> [A_r,C_a]=0,\> [C_a,C_b]=0
\eeq
\beq\label{BD}
[B_r,B_s]=0,\> [B_r,D_a]=0,\> [D_a,D_b]=0
\eeq
for $r,s\in \{1,\dots, p\}$ and  $a,b\in\{1,\dots,q\}$.
\end{lemm}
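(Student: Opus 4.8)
The plan is to reformulate the two relations \eqref{eucl1} as operator commutation relations for the endomorphism $\hat R$ of $\IR^{N_1}\otimes\IR^{N_2}$ introduced in \eqref{chR}–\eqref{ABCD}, then to expand those identities using the representation \eqref{ABCD} and to separate variables by a linear–independence argument. The key conceptual point is that, after passing to $\hat R$, relations \eqref{eucl1} become the statement that two ``partial'' copies of $\hat R$ commute, and the decomposition \eqref{ABCD} turns each such commutation into a system of matrix commutators.

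First I would rewrite \eqref{eucl1} in terms of $\hat R$. Since $\hat R^{\lambda\alpha}_{\mu\beta}=R^{\lambda\alpha}_{\beta\mu}$ amounts to transposing the two lower indices, the first relation of \eqref{eucl1} becomes $\hat R^{\lambda\beta}_{\rho\alpha}\hat R^{\rho\delta}_{\mu\gamma}=\hat R^{\lambda\delta}_{\rho\gamma}\hat R^{\rho\beta}_{\mu\alpha}$ (summed over $\rho\in\{1,\dots,N_1\}$) and the second becomes $\hat R^{\lambda\beta}_{\nu\gamma}\hat R^{\mu\gamma}_{\rho\alpha}=\hat R^{\mu\beta}_{\rho\gamma}\hat R^{\lambda\gamma}_{\nu\alpha}$ (summed over $\gamma\in\{1,\dots,N_2\}$). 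I claim these are exactly the identities $\hat R_{12}\hat R_{13}=\hat R_{13}\hat R_{12}$ on $\IR^{N_1}\otimes\IR^{N_2}\otimes\IR^{N_2}$, and $\hat R_{13}\hat R_{23}=\hat R_{23}\hat R_{13}$ on $\IR^{N_1}\otimes\IR^{N_1}\otimes\IR^{N_2}$, where the subscripts indicate on which two tensor factors $\hat R$ acts (in the first case factor $1$ is $\IR^{N_1}$ and carries the summed index $\rho$; in the second, factor $3$ is $\IR^{N_2}$ and carries the summed index $\gamma$). Verifying this identification is a matching of matrix entries, and it is the one genuinely error-prone step — the index bookkeeping is the main obstacle, but it is mechanical.

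Next I would substitute $\hat R=\sum_r A_r\otimes B_r+i\sum_a C_a\otimes D_a$ into the first commutator. Because $\hat R_{12}$ and $\hat R_{13}$ act through the shared $\IR^{N_1}$ factor but through different $\IR^{N_2}$ factors, the products multiply block by block, and $\hat R_{12}\hat R_{13}-\hat R_{13}\hat R_{12}$ collects into four families according to the type on $\IR^{N_2}\otimes\IR^{N_2}$:
\[
\sum_{r,s}[A_r,A_s]\otimes B_r\otimes B_s-\sum_{a,b}[C_a,C_b]\otimes D_a\otimes D_b+i\sum_{r,b}[A_r,C_b]\otimes B_r\otimes D_b-i\sum_{a,s}[A_s,C_a]\otimes D_a\otimes B_s .
\]
Here the $\IR^{N_1}$-matrix commutators are precisely those of \eqref{AC}.

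The crux is the separation. The $B_r$ are symmetric and the $D_a$ antisymmetric, hence span complementary subspaces of the real matrices $M_{N_2}(\IR)$, and each family is linearly independent by the hypotheses accompanying \eqref{AB}–\eqref{CD}; therefore $\{B_r\}\cup\{D_a\}$ is linearly independent and the tensors $B_r\otimes B_s$, $D_a\otimes D_b$, $B_r\otimes D_b$, $D_a\otimes B_s$ are linearly independent in $M_{N_2}(\IR)\otimes M_{N_2}(\IR)$ (equivalently, one first splits real from imaginary part, then the symmetric$\otimes$symmetric block from the antisymmetric$\otimes$antisymmetric block, and so on). Consequently the vanishing of the displayed difference forces each coefficient $[A_r,A_s]$, $[C_a,C_b]$, $[A_r,C_b]$ to vanish, which is \eqref{AC}; conversely, if \eqref{AC} holds the difference is manifestly zero, so $\hat R_{12}$ and $\hat R_{13}$ commute and the first relation of \eqref{eucl1} holds. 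Running the identical computation for $\hat R_{13}\hat R_{23}-\hat R_{23}\hat R_{13}$ — now with the roles of the two spaces exchanged, so that the linear independence of $\{A_r\}\cup\{C_a\}$ in $M_{N_1}(\IR)$ is used and the surviving commutators lie in $M_{N_2}(\IR)$ — gives the equivalence of the second relation of \eqref{eucl1} with \eqref{BD}, completing the proof in both directions.
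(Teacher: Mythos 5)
Your proof is correct and follows essentially the same route as the paper: expand the relations \eqref{eucl1} as endomorphism identities on $\IR^{N_1}\otimes\IR^{N_2}\otimes\IR^{N_2}$ (resp. $\IR^{N_1}\otimes\IR^{N_1}\otimes\IR^{N_2}$) via the representation \eqref{ABCD}, then separate the commutator coefficients using the linear independence of the $A_r, B_r, C_a, D_a$ together with the symmetric/antisymmetric splitting. Your packaging of the relations as the commutation identities $\hat R_{12}\hat R_{13}=\hat R_{13}\hat R_{12}$ and $\hat R_{13}\hat R_{23}=\hat R_{23}\hat R_{13}$ is only a cosmetic reformulation of the same computation (and in fact your expansion fixes a sign and a factor of $i$ that are garbled in the paper's displayed formula, harmlessly, since the separation argument is insensitive to them).
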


\noindent \underbar{Proof}. The first relations \eqref{eucl1} read as endomorphisms of $\mathbb R^{N_1}\otimes \mathbb R^{N_2}\otimes \mathbb R^{N_2}$
\[
\begin{array}{ll}
\sum_{r,s}[A_r,A_s] \otimes B_r\otimes B_s & +\sum_{r,a}[A_r,C_a]\otimes (B_r\otimes D_a-D_a\otimes B_r)\\
\\
 & + \sum_{a,b}[C_a\otimes C_b] \otimes D_a\otimes D_b =0
\end{array}
\]
which is equivalent to \eqref{AC} in view of the assumptions of independence. Similarly the second relations \eqref{eucl1} are equivalent to \eqref{BD}. $\square$

It is clear that the representation \eqref{ABCD} with properties \eqref{AC} and \eqref{BD} imply the equation \eqref{eqc} and that one finally has the following theorem.

\begin{theo}\label{rR}
Relations \eqref{alde} define (the algebra of) a noncommutative product of the Euclidean space $\mathbb R^{N_1}$ with the Euclidean space $\mathbb R^{N_2}$ if and only if $\hat R$ defined by \eqref{chR} admits the representation \eqref{ABCD} above with the relations \eqref{AC}, \eqref{BD} and the normalization condition \eqref{eqino} for $S$ and $T$ given by \eqref{AB} and \eqref{CD}.
\end{theo}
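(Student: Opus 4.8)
The plan is to assemble the statement from the characterization at the end of Section~3 together with Lemmas~\ref{ST} and~\ref{Qad}, so that almost no new computation is needed. Recall that the relations \eqref{alde} define a noncommutative product of the Euclidean spaces $\mathbb R^{N_1}$ and $\mathbb R^{N_2}$ precisely when $R$ satisfies both \eqref{eucl0} and \eqref{eucl1}; hence I would reduce the theorem to showing that this pair of conditions is equivalent to the representation \eqref{ABCD} of $\hat R$ subject to \eqref{AC}, \eqref{BD} and the normalization \eqref{eqino}.

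First I would feed \eqref{eucl0} through Lemma~\ref{ST}, obtaining the decomposition $R=S+iT$ of \eqref{eqSA} with $S$, $T$ satisfying the symmetries \eqref{eqS}, \eqref{eqT}, the normalization \eqref{eqino}, and the commutation \eqref{eqc}. Next I would invoke the linear-algebra fact that a real endomorphism of $\mathbb R^{N_1}\otimes\mathbb R^{N_2}$ symmetric in each index pair, as in \eqref{eqS}, factors as $S=\sum_r A_r\otimes B_r$ with the $A_r$ and the $B_r$ real symmetric and linearly independent, and likewise \eqref{eqT} factors $T=\sum_a C_a\otimes D_a$ with the $C_a$, $D_a$ real antisymmetric and independent; this gives \eqref{AB}, \eqref{CD}, and conversely such separated forms force \eqref{eqS}, \eqref{eqT}. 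Thus \eqref{eucl0} becomes equivalent to \eqref{ABCD} together with \eqref{eqino} and \eqref{eqc}. In parallel, Lemma~\ref{Qad} already records that \eqref{eucl1} is equivalent to \eqref{AC} and \eqref{BD}.

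The one point I would check by hand is that \eqref{eqc} is redundant in the presence of \eqref{ABCD}, \eqref{AC} and \eqref{BD}: expanding $TS$ and $ST$ through the tensor decompositions and using $[A_r,C_a]=0$ and $[B_r,D_a]=0$ makes $[T,S]$ vanish term by term, so \eqref{eqc} may be dropped from the list of defining conditions. With this in place both implications follow formally. If \eqref{alde} defines a noncommutative product, then \eqref{eucl0} and \eqref{eucl1} hold, giving \eqref{ABCD}, \eqref{AC}, \eqref{BD} and \eqref{eqino}; conversely, starting from \eqref{ABCD}, \eqref{AC}, \eqref{BD} and \eqref{eqino} I would recover \eqref{eqc} as above, then \eqref{eucl0} via Lemma~\ref{ST} and \eqref{eucl1} via Lemma~\ref{Qad}, so the relations define a noncommutative product. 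I expect the main (indeed the only non-formal) obstacle to be the tensor-decomposition step converting the index symmetries \eqref{eqS}, \eqref{eqT} into the separated representations \eqref{AB}, \eqref{CD} with independent factors; the remainder is bookkeeping among the established lemmas, the redundancy of \eqref{eqc} being the single computation worth performing explicitly.
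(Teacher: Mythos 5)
Your proposal is correct and follows essentially the same route as the paper: the paper obtains Theorem \ref{rR} precisely by combining the characterization via \eqref{eucl0}--\eqref{eucl1}, Lemma \ref{ST}, the tensor decompositions \eqref{AB}, \eqref{CD} (equivalent to \eqref{eqS}, \eqref{eqT}), and Lemma \ref{Qad}, noting as you do that \eqref{ABCD} with \eqref{AC} and \eqref{BD} makes \eqref{eqc} redundant. Your explicit term-by-term verification of that redundancy is exactly the step the paper dismisses as ``clear.''
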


Note that the normalization condition \eqref{eqino} reads
\beq\label{NABCD}
\sum_{r,s} A_rA_s\otimes B_rB_s + \sum_{a,b} C_a C_b\otimes D_aD_b=\bbbone_{N_1} \otimes \bbbone_{N_2}
\eeq
in terms of the representation \eqref{ABCD}.

\subsection{Reduction to canonical form} It is clear that one does not change the $\ast$-algebra $\ca_R$ and its central elements $\sum_\lambda(x^\lambda_1)^2$ and $\sum_\alpha (x^\alpha_2)^2$ if one changes the generators $x^\lambda_1$, $x^\alpha_2$ by a rotation of $O(N_1)\times O(N_2)$. We now use this freedom to put the $A_r,C_a$ and the $B_r,D_a$ in canonical form by using their commutation properties \eqref{AC} and \eqref{BD}.

Note that $O(N_1)$ with $A_r,C_a$ satisfying \eqref{AC} is completely similar to $O(N_2)$ with $B_r,D_a$ satisfying \eqref{BD}, so it is sufficient to treat the case of the $A_r, C_a$ satisfying \eqref{AC} with the action of $O(N_1)$.

The $A_r,C_a$ are a family of real $N_1\times N_1$-matrices which commute among themselves with the $A_r$ symmetric while the $C_a$ are antisymmetric. It follows that the $A_r$ and $(C_a)^2$ is a family of commuting real symmetric $N_1\times N_1$-matrices which can be simultaneously diagonalized by a rotation of $O(N_1)$. Notice that the $(C_a)^2$ are negative since $(C_a)^2=-C_aC^t_a$.

Let $-(c_a)^2<0$ be a non trivial eigenvalue of $(C_a)^2$ with eigenvector $x$,  then $C_ax$ is not vanishing and is again an eigenvector of $(C_a)^2$ with eigenvalue $-(c_a)^2$ which is orthogonal to $x$ since 
\[
(x, C_ax)=(C^t_a x, x)=-(C_a x,x)=0
\]
 in view of the antisymmetry of $C_a$.
 
  To the 2-dimensional cell $-(c_a)^2\left(\begin{array}{cc}1 & 0\\ 0 & 1\end{array}\right)=-(c_a)^2 I$ occurring in the diagonalization of $(C_a)^2$ corresponds the cell $c_a\left(\begin{array}{cc} 0 & -1\\1 & 0\end{array}\right)=c_aJ$ of $C_a$ in the plane spanned by $x$ and $C_ax$ (where one can choose $c_a$ to be positive, by an eventual transposition).

Finally, by using the fact that the $A_r$ and the $C_a$ commute, there is an even dimensional subspace of $\mathbb R^{N_1}$, say of dimension $2k_1\leq N_1$, such that the diagonalization of the $A_r$ and the $(C_a)^2$ leads to an orthonormal frame in which one has
\beq\label{aA}
A_r = \left(\begin{array}{llllll}
a^1_r I & & & & &\hfill\cr
 & \ddots & & & &\hfill\cr
 & & a^{k_1}_r I & & &\hfill\cr
 & & & a^{' 1}_r & &\hfill\cr
 &&&& \ddots &\hfill\cr
 &&&&& a^{' N_1-2k_1}_r\hfill\cr
\end{array}
 \right)
\eeq
 and
\beq\label{cC}
C_a = \left(\begin{array}{llllll}
c^1_aJ& & & & &\hfill\cr
 & \ddots & & & &\hfill\cr
 & & c^{k_1}_aJ & & &\hfill\cr
 & & & 0& &\hfill\cr
 &&&& \ddots &\hfill\cr
 &&&&& 0\hfill\cr
\end{array}
 \right)
\eeq
with 
\beq\label{IJ}
I=\left(\begin{array}{cc}
1 & 0\\
0 & 1\end{array}\right),\>\>\>\>
J=\left(\begin{array}{cc}
0 & -1\\
1 & 0\end{array}\right)
\eeq
and $a^m_r, a^{'m'}_r\in \mathbb R,\>\> c^m_a\in \mathbb R$ and $\sum_a(c^m_a)^2\not=0$ for $m\in \{1,\dots,k_1\}, m'\in \{2k_1+1,\dots,N_1\}$.

The same discussion leads to the existence of an orthonormal frame in $\mathbb R^{N_2}$ in which one has

\beq\label{bB}
B_r = \left(\begin{array}{llllll}
b^1_r I & & & & &\hfill\cr
 & \ddots & & & &\hfill\cr
 & & b^{k_2}_rI & & &\hfill\cr
 & & & b^{' 1}_r & &\hfill\cr
 &&&& \ddots &\hfill\cr
 &&&&& b^{' N_2-2k_2}_r\hfill\cr
\end{array}
\right)
\eeq
and
\beq\label{dD}
D_a = \left(\begin{array}{llllll}
d^1_aJ& & & & &\hfill\cr
 & \ddots & & & &\hfill\cr
 & & d^{k_2}_aJ & & &\hfill\cr
 & & & 0& &\hfill\cr
 &&&& \ddots &\hfill\cr
 &&&&& 0\hfill\cr
\end{array}
\right)
\eeq
with $b^m_r,b^{'m'}_r\in \mathbb R, d^m_a\in \mathbb R$ and $\sum_a(d^m_a)^2\not=0$ for $m\in\{1,\dots, k_2\}$, $m'\in\{2k_2+1,\dots, N_2\}$.

In both cases by eventual transpositions one can choose that the $c^i_a$ and the $d^j_a$ are positive.

\subsection{Identification as ``$\theta$-deformation"}

The normalization condition which reads \eqref{NABCD} in terms of the $A_r, B_r, C_a$ and $D_a$ are equivalent to the following conditions for the $A_r, B_r,  C_a, D_a$ given by \eqref{aA}, \eqref{bB}, \eqref{cC}, \eqref{dD} :
\beq\label{abcd}
\left(\sum_r a^{m_1}_r b^{m_2}_r\right)^2 +\left(\sum_a c^{m_1}_a d^{m_2}_a\right)^2=1
\eeq

\beq \label{ab1}
\left(\sum_r a^{m_1}_r b'^{m'_2}_r\right)^2=1
\eeq

\beq\label{ab2}
\left(\sum_r a'^{m'_1}_r b^{m_2}_r\right)^2=1
\eeq
\beq\label{ab3}
\left(\sum_r a'^{m'_1}_r b'^{m'_2}_r\right)^2=1
\eeq

for $1\leq m_1\leq k_1,\>\> 1\leq m_2\leq k_2,\>\>  2k_1+1\leq m'_1\leq N_1,\>\> 2k_2+1\leq m'_2\leq N_2.$

It follows from \eqref{abcd} that one can set
\beq\label{theta}
\sum_r a^{m_1}_r b^{m_2}_r=\cos(\theta_{m_1m_2}), \sum_a c^{m_1}_a d^{m_2}_a=\sin(\theta_{m_1m_2})
\eeq
and  the other equations imply that one has
\beq\label{epsi}
\sum_r a^{m_1}_r b'^{m'_2}_r = \varepsilon^{m_1m'_1}, \sum_r a'^{m'_1}_r b^{m_2}_r=\varepsilon^{m'_1m_2},\>\> \sum_r a'^{m'_1}_r b'^{m'_2}_r=\varepsilon^{m'_1m'_2}
\eeq
where the $\varepsilon^{uv}$ are $\pm 1$.

 Let $x^\lambda_1$ and $x^\alpha_2$ be an orthonormal basis for the above canonical form and let us introduce complex coordinates $z^m_1=x^{2m-1}_1 +ix^{2m}_1$ for $m\in \{1\dots, k_1\}$ and $z^m_2=x^{2m-1}_2+ix^{2m}_2$ for $m\in \{1,\dots, k_2\}$. Then the relations for the $x^\lambda_1$ and $x^\alpha_2$
  read
\beq\label{Ck1Ck2}
\left\{
\begin{array}{l}
z^{m_1}_1 z^{m_2}_2 = e^{i\theta_{m_1m_2}} z^{m_2}_2 z^{m_1}_1\\
\\
\bar z^{m_1}_1 z^{m_2}_2=e^{-i\theta_{m_1m_2}}z^{m_2}_2 \bar z^{m_1}_1\\
\\
z^{m_1}_1 \bar z^{m_2}_2= e^{-i\theta_{m_1m_2}}\bar z_2^{m_2} z_1^{m_1}\\
\\
\bar z_1^{m_1} \bar z^{m_2}_2 = e^{i\theta_{m_1 m_2}} \bar z_2^{m_2} \bar z_1^{m_1}
\end{array}
\right.
\eeq	
for $m_1\in \{1,\dots,k_1\},\>\>  m_2\in\{1,\dots,k_2\}$ while the other relations read
\beq\label{pcl1}
z^{m_1}_1 x^{m'_2}_2=\varepsilon^{m_1m'_2} x^{m'_2}_2 z_1^{m_1}
\eeq
for $m_1\in \{1,\dots, k_1\},\>\> m'_2\in \{2k_2+1,\dots, N_2\}$
\beq\label{pcl2}
x^{m'_1}_1 z^{m_2}_2=\varepsilon^{m'_1m_2} z^{m_2}_2 x^{m'_1}_1
\eeq
for $m'_1\in \{2k_1+1,\dots, N_1\},\>\> m_2\in \{1,\dots, k_2\}$ and 
\beq\label{pcl3}
x^{m'_1}_1 x^{m'_2}_2=\varepsilon^{m'_1 m'_2} x^{m'_2}_2 x^{m'_1}_1
\eeq
for $m'_1\in \{2k_1+1,\dots, N_1\},\>\>  m'_2\in \{2k_2+1,\dots, N_2\}$.

Relations \eqref{Ck1Ck2} define (by duality) a noncommutative product of the Hilbertian spaces $\mathbb C^{k_1}$ and $\mathbb C^{k_2}$ which is a particular case of noncommutative versions of $\mathbb C^{k_1+k_2}$ known as $\theta$-deformations (see \S 5.1 below for the case $k_1+k_2=2$)  and studied in details in \cite{ac-mdv:2002a} where the corresponding generalized Clifford algebras are given in explicit form and used there to show that the corresponding noncommutative spheres $S^{2(k_1+k_2)-1}$ are noncommutative spherical manifolds in the sense of \cite{ac-lan:2001}, \cite{ac-mdv:2002a}. This in turn easily implies that the noncommutative sphere $S^{N_1+N_2-1}$ of \S 3.6 are noncommutative spherical manifolds as well; this is clear if all the $\varepsilon^{uv}$ occurring in formulae \eqref{epsi} are $+1$ and it is not hard to take care of the eventual $ -1$ cases. Alternatively we could stay in the connected component which contains the classical (commutative) solution $R_0$ given by \eqref{classic}.

\section{Noncommutative quaternionic planes and tori}

The aim of this section is to define noncommutative quaternionic planes by using the above description of noncommutative Euclidean spaces. As a preliminary we first describe the noncommutative complex plane $\mathbb C^2_\theta$ in terms of noncommutative product of $\mathbb R^2$ with $\mathbb R^2$.

\subsection{The noncommutative complex planes $\mathbb C^2_\theta$}
A one-parameter family of noncommutative two-dimensional complex spaces $\Ct$ was introduced in \cite{ac-lan:2001}. With $\theta\in \IR$, the coordinate unital $*$-algebra  $\ca_\theta$ of $\Ct$
is generated by two normal elements $z_1, z_2$, that is 
\beq\label{com1}
[z_1, z^*_1]=0=[z_2, z^*_2]
\eeq
with relations:
\beq\label{com2}
z_1 z_2 = e^{i \theta} z_2 z_1 , \qquad z_1 z^*_2 = e^{- i \theta} z^*_2 z_1 
\eeq
together with their $*$-conjugates. Sending $\theta \to- \theta$ results into an isomorphic algebra at the expenses of exchanging 
$z_1 \leftrightarrow z_2$. Thus this family is really parametrised by 
$\IS^1 / \IZ_2$ that is by the real projective line $P_1(\mathbb R)$. 

One easily checks that both $z_1 z^*_1$ and $z_2 z^*_2$ are in the center of the algebra $\ca_\theta$, so one may define the algebra $\ca_\theta/(\{ z_1z_1^\ast-\bbbone, z_2z_2^\ast-\bbbone\})$ which is the algebra defining the noncommutative torus $T^2_\theta$.

One can pass to a noncommutative four-dimensional Euclidean spaces 
$\Ct \simeq \Rt$ via hermitian generators $(x_1^1, x_1^2)$ and $(x_2^1, x_2^2)$
by writing
$$
z_1 = x_1^1 + i \, x_1^2 , \qquad z_2 = x_2^1 + i \, x_2^2 .
$$
Then the algebra relations are easily found to be given by 
\[
 x_1^1 x_1^2 = x_1^2 x_1^1 , \qquad x_2^1 x_2^2 = x_2^2 x_2^1  \nn , \qquad
 x_1^\lambda x_2^\alpha = R^{\lambda \alpha}_{\beta \mu} \, x_2^\beta x_1^\mu 
\]
with matrix 
\beq\label{r4d}
R^{\lambda \alpha}_{\beta \rho} = \cos(\theta) \, \delta^\lambda_\rho \delta^\alpha_\beta + 
i \sin(\theta) \, J^\lambda_\rho J^\alpha_\beta \qquad \mbox{with} \quad 
J = \begin{pmatrix} 
0 & - 1 \\
1 & 0 
\end{pmatrix}
\eeq
so one has $\ca_\theta=\ca_R$ with the above $R$ matrix and in fact, in the notation of \S 3.3 we are really defining $\mathbb R^4_\theta$ as a noncommutative product 
$\Rt=\IR^2 \times_\theta \IR^2$  of the Euclidean space $\mathbb R^2$ with itself. Indeed Formula \eqref{r4d} is a particular case of Formula \eqref{ABCD} and all the conditions of Theorem \ref{rR} are satisfied with $N_1=N_2=2$.

It is worth noticing here that the matrix $J$ in \eqref{r4d} represents the complex imaginary unit $i$. Indeed one has $i(x^1+ix^2)=(-x^2+ix^1)$ so the multiplication by $i$ reads
\[
\left(\begin{array}{c}
x^1\\ x^2\end{array}\right)\mapsto J \left(\begin{array}{c}x^1\\ x^2\end{array}\right)=\left(\begin{array}{c}-x^2\\ x^1\end{array}\right)
\]
in terms of the coordinates of $\mathbb R^2$.\\

In the following we shall generalize Formula \eqref{r4d} for the quaternions so in the next subsection we give a description of the imaginary quaternionic unit in terms of real $4\times 4$-matrices acting on the coordinates of $\mathbb R^4$ corresponding to quaternions.\\

\noindent\underbar{Remark}. Notice that $(z_1,z_2)\mapsto (z'_1,z'_2)$ with $z'_1=y_1z_1,z'_2=y_2z_2$ for $(y_1,y_2)\in \mathbb C^2$ preserve the relations \eqref{com1} and \eqref{com2} which in particular implies that one has an action of the multiplicative group $\mathbb C_\ast\times \mathbb C_\ast$ by automorphisms on the $\ast$-algebra $\ca_R$ of coordinates of $\mathbb C^2_\theta$, where $\mathbb C_\ast$ is the multiplicative group of non vanishing element of $\mathbb C$ (i.e.$\mathbb C_\ast=\mathbb C\backslash \{0\})$ and where $R$ is given by \eqref{r4d}. This induces an action of 
\[
T^2=U_1(\mathbb C)\times U_1(\mathbb C)= S^1\times S^1
\]
 on $\ca_R$ which induces an action of $T^2$ on the corresponding noncommutative torus $T^2_\theta$.

\subsection{Quaternions and the 4-dimensional Euclidean space}

The space of quaternions $\IH$ is identified with $\IR^4$ in the usual way:
\beq\label{qrid}
\IH \ni q = x^0 e_0 + x^1 e_1 + x^2 e_2 + x^3 e_3 \quad \longmapsto \quad x = (x^\mu) = 
(x^0, x^1, x^2, x^3) \in \IR^4 
\eeq
where $e_0=1$ and the imaginary units $e_a$ obey the multiplication rule 
$$
e_a e_b = - \delta_{ab} + \sum_{c=1}^3\varepsilon_{abc} e_c 
$$
with standard notations. 

The subgroup $U_1(\mathbb H)=\{q\vert q\bar q=1\}$ of the multiplicative group $\mathbb H_\ast$ of non vanishing quaternions is isomorphic to $SU(2)$ and will be identified to it
\beq\label{su2}
U_1(\mathbb H)=SU(2)
\eeq
It can also  be identified to the sphere $S^3=\{x \mathbb \in R^4\vert \parallel x\parallel^2=1\}$.

With the identification \eqref{qrid}, left and right multiplication of quaternions are represented 
by matrices acting on $\IR^4$
$$
L_{q'} q := q'q \quad \to \quad E^+_{q'} (x) \qquad \mbox{and} \qquad 
R_{q'} q := qq' \quad \to \quad E^-_{q'} (x) 
$$
with $q,q'\in \mathbb H$ and $x\in \mathbb R^4$ corresponding to $q$ as before.
For $q\in U_1(\mathbb H)$, both $E^+_{q}$ and $E^-_{q}$ are orthogonal matrices. Thus by restriction to $q\in U_1(\mathbb H)$ the mappings $q\mapsto E^{\pm}_q$ give two commuting representations of $U_1(\mathbb H)=SU(2)$ in $\mathbb R^4$ by rotations ($\in SO(4)$). Taken together this gives an action of $\SU(2)_L \times \SU(2)_R$ on $\IR^4$, with $L/R$ denoting left/right action.
This action is  the action of $\SO(4) = \SU(2)_L \times \SU(2)_R / \IZ_2$. 

Let us denote $E^{\pm}_{a} = E^{\pm}_{e_a}$ for the imaginary units. 
By definition one has that 
\begin{align*}
E^{+}_{a} E^{-}_{b} = E^{-}_{b} E^{+}_{a} , \qquad  E^{\pm}_{a} E^{\pm}_{b} = - \delta_{ab}\bbbone 
\pm \sum_{c=1}^3\varepsilon_{abc} E^{\pm}_{c} 
\end{align*}
but in the following it will turn out to be more convenient to change a sign to the `right' matrices: we shall rather use matrices 
$J^{+}_{a}:=E^{+}_{a}$ and $J^{-}_{a}:=-E^{-}_{a}$. For these one has
\begin{align*}
J^{+}_{a} J^{-}_{b} = J^{-}_{b} J^{+}_{a} , \qquad 
J^{\pm}_{a} J^{\pm}_{b} = - \delta_{ab} \bbbone + \sum_{c=1}^3\varepsilon_{abc} J^{\pm}_{c} 
\end{align*}
that is the matrices $J^\pm_a$ are two commuting copies of the quaternionic imaginary units.

By using the above definitions one can explicitly compute the expressions
\beq
(J^{\pm}_{a})_{\mu\nu} = \mp ( \delta_{0\mu}  \delta_{a\nu} - \delta_{a\mu}  \delta_{0\nu})
+ \sum_{b,c=1}^3\varepsilon_{abc} \delta_{b\mu} \delta_{c\nu}  \quad \mbox{for} \quad a=1,2,3 
\eeq
for the components of the real $4 \times 4$  matrices $J^{\pm}_{a}$.

For the standard positive definite scalar product on $\IR^4$, the six matrices $J^{\pm}_{a}$ are readily checked to be antisymmetric, ${}^t J^{\pm}_{a} = - J^{\pm}_{a}$ and one finds in addition that the $J^+_a$ form an orthonormal basis of the space of antisymmetric self-dual matrices, the $J^-_a$ form an orthonormal basis of the antisymmetric anti-self-dual matrices while 
the nine matrices $J^{+}_{a} J^{-}_{b}$ form an orthonormal basis for the space of symmetric traceless matrices. In fact, the data $(\bbbone, J^{+}_{a}, J^{-}_{a}, J^{+}_{a} J^{-}_{b})$ form an orthonormal basis of the endomorphisms algebra of $\IR^4$ (that is matrices) adapted to the decomposition 
\begin{align*}
\IR^4 \otimes \IR^4{}^* \simeq \IR^4{}^* \otimes \IR^4 & = D^{(0,0)} \oplus D^{(1,0)} \oplus D^{(0,1)} \oplus D^{(1,1)} \\
& = \IR \bbbone \oplus \wedge^2_+ \IR^4{}^* \oplus   \wedge^2_- \IR^4{}^* \oplus S^2_{(0)} \IR^4{}^* 
\end{align*}
into irreducible $\SO(4)$-invariant subspaces. Here $S^2_{(0)} \IR^4{}^*$ is the space of trace-less elements of the degree-two part of the symmetric algebra over $\IR^4{}^*$, while the space $\wedge^2 \IR^4{}^*$ of exterior two-forms (anti-symmetric two-vectors) on $\IR^4$ is split into a 
self-dual and an anti-self-dual part,\linebreak[4]
$\wedge^2 \IR^4{}^* = \wedge^2_+ \IR^4{}^* \oplus   \wedge^2_- \IR^4{}^*$, where
$
\wedge^2_\pm \IR^4{}^* = \{F \in \wedge^2 \IR^4{}^* \, | \, \star F = \pm F \} \, \qquad $ with
$(\star F)_{\mu \nu} = \tfrac{1}{2} \sum_{\rho, \sigma=1}^4 \varepsilon_{\mu \nu \rho \sigma} F_{\rho \sigma} .
$ 
For the space $\wedge^2 \IR^4{}^*$ with the scalar product 
$(F | F') = \tfrac{1}{4} \sum_{\mu, \nu=1}^4 F_{\mu \nu} F'_{\mu \nu}$, the splitting 
$\wedge^2 \IR^4{}^* = \wedge^2_+ \IR^4{}^* \oplus \wedge^2_- \IR^4{}^*$ is an orthogonal one into three-dimensional irreducible subspaces $\wedge^2_\pm \IR^4{}^*$ and the representation of $\SO(4)$ on 
$\wedge^2 \IR^4{}^* = \wedge^2_+ \IR^4{}^* \oplus \wedge^2_- \IR^4{}^*$ induces an homomorphism, 
\beq\label{act4}
\pi: \SO(4) \to \SO(3) \times \SO(3) 
\eeq
with kernel $\pm\bbbone$.

\noindent\underbar{Remark}.  The two $SO(3)$ group in \eqref{act4} are the isomorphic images of the two $SO(3)$ Lie subgroups $SO_+(3)$ and $SO_-(3)$ of $SO(4)$ with Lie algebras given by
\beq\label{LSO}
\Lie(SO_{\pm}(3))=\left\{\sum_a y^aJ^{\pm}_a\vert (y^1, y^2 ,y^3)\in \mathbb R^3\right\}
\eeq
which commute in $\Lie(SO(4))$.

It is clear from above that under the adjoint action of $SO(4)$, $SO_-(3)$ preserves the $J^+_a$ while $SO_+(3)$ preserves the $J^-_a$.

\subsection{Noncommutative quaternionic planes and tori}
We are now ready to give a definition of what is meant by a ``noncommutative product" $\mathbb H\times_R \mathbb H$ of $\mathbb H$ with $\mathbb H$. We impose of course that its coordinate algebra is a $\ast$-algebra $\ca_R$ corresponding to a noncommutative product of two 4-dimensional Euclidean spaces $\mathbb R^4$. But now we want more namely to be able to act say on the left by pairs of quaternions $q_1$ and $q_2$ on the quaternions corresponding to $x_1\in \mathbb R^4$ and $x_2\in \mathbb R^4$ as in \eqref{qrid}. From last section we know that this corresponds to $x_1\mapsto (q^0_1\bbbone +q^a_1 J^+_a)x_1$ and \linebreak[4] $x_2\mapsto (q^0_2\bbbone +q^a_2J^+_a) x_2$. 

Thus we now impose that $x_1\mapsto J^+_ax_1$  and $x_2\mapsto J^+_b x_2$
induces an automorphism of $\ca_R$ for any  $a,b\in\{1,2,3\}$. The most natural way to generalize \eqref{r4d} in this setting is to define $\ca_R$ as the $\ast$-algebra generated by the hermitian elements $x^\lambda_1$ and $x^\alpha_2$ ($\lambda,\alpha\in\{0,1,2,3\}$) with relations
\beq\label{ald4}
x^\lambda_1 x^\mu_1=x^\mu_1 x^\lambda_1,\qquad  x^\alpha_2x^\beta_2=x^\beta_2x^\alpha_2,\qquad  x^\lambda_1 x^\alpha_2=R^{\lambda\alpha}_{\beta\mu} x^\beta_2 x^\mu_1
\eeq
with 
\beq\label{r8d}
R^{\lambda\alpha}_{\beta\mu} = u\delta^\lambda_\mu \delta^\alpha_\beta +iJ^-(\vec v_1)^\lambda_\mu J^-(\vec v_2)^\alpha_\beta
\eeq
and 
\beq\label{norm8}
(u)^2+\parallel \vec v_1\parallel ^2\parallel \vec v_2\parallel^2=1
\eeq
where $J^-(\vec v)= v^a J^-_a$ and $\parallel \vec v\parallel^2=\sum_a(v^a)^2$ for $\vec v = (v^1,v^2,v^3)\in \mathbb R^3$.
 
 Indeed then the mappings $x_1\mapsto J^+_a x_1$, $x_2\mapsto J^+_bx_2$ for $a,b\in \{1,2,3\}$ leave the relations \eqref{ald4} of $\ca_R$ invariant and define therefore $\ast$-automor\-phisms of $\ca_R$ in view of the commutations of the $J^+_a$ with the $J^-_b$ for $a,b\in \{1,2,3\}$.
 
 It follows that the multiplicative group $\mathbb H_\ast\times \mathbb H_\ast$ acts by automorphisms of the $\ast$-algebra $\ca_R$, setting $q=q^0+q^a e_a\in \mathbb H_\ast\mapsto q^0\bbbone +q^aJ^+_a$ with obvious conventions. This induces an action of $U_1(\mathbb H)\times U_1(\mathbb H)$ on $\ca_R$ by restriction to the $q\in U_1(\mathbb H)$.  This action passes to the quotient by the ideal generated by the two central elements $\sum_\lambda (x^\lambda_1)^2$, $\sum_\alpha (x^\alpha_2)^2$ and defines an action of the classical quaternionic torus  $T^2_\mathbb H=U_1(\mathbb H)\times U_1(\mathbb H)$ by $\ast$-automorphisms of the coordinate algebra of the ``noncommutative" quaternionic torus $(T^2_{\mathbb H})_R$. The situation is completely similar to the one of the complex case leading to an action of the classical torus $T^2$ on the noncommutative torus $T^2_\theta$.
 
 One can simplify the matrix $R$ of \eqref{r8d} by isomorphisms which preserve the actions of $\mathbb H_\ast\times \mathbb H_\ast$ on $\ca_R$ i.e. by the action of $SO_-(3)$ described in the last section since it preserves the $J^+_a$  (i.e. the action of $\mathbb H_\ast\times \mathbb H_\ast$). With this,  one can modify by rotation $\vec v_1$ and $\vec v_2$ so that one can arrive to the replacement $J^-(\vec v_1)\otimes J^-(\vec v_2)$ by say $J^-_1\otimes (u^1J^-_1+u^2J^-_2)$ with $(u^1)^2+(u^2)^2=\parallel \vec v_1\parallel^2 \parallel \vec v_2\parallel^2$. By setting $u=u^0$ in \eqref{r8d} this leads to 
 \beq\label{sr8d}
 R^{\lambda\alpha}_{\beta\mu}=u^0\delta^\lambda_\mu\delta^\alpha_\beta+iJ^{-\lambda}_{1\mu} (u^1J^{-\alpha}_{1\beta}+u^2J_{2\beta}^{-\alpha})
 \eeq
for the simplified $R$ matrix. Condition \eqref{norm8} becomes
\[
(u^0)^2+(u^1)^2+(u^2)^2=1
\]
which gives a parametrization by $S^2=P_1(\mathbb C)$ the complex projective line.

The above construction of ``noncommutative products" $\mathbb H\times_R \mathbb H$ and noncommutative quaternionic tori $(T^2_{\mathbb H})_R$ with action of the classical quaternionic torus $T^2_{\mathbb H}=U_1(\mathbb H)\times U_1(\mathbb H)= S^3\times S^3=SU(2)\times SU(2)$ seems to be quite optimal.

\section{Conclusion}

In Section 2 (\S 2.3) we have defined a noncommutative $n$-dimensional Euclidean space $\mathbb R^n$ by setting that its (noncommutative) coordinate algebra (the analog of the algebra of complex polynomials on $\mathbb R^n$) is a complex quadratic $\ast$-algebra $\ca$ generated be $n$ hermitian elements $x^k$ ($k\in\{1,\dots,n\}$) which is a Koszul Artin-Shelter algebra of global dimension $n$ and such that $\sum_k(x^k)^2$ is an element of the center $Z(\ca)$ of $\ca$. Let us discuss these assumptions for $\ca$.

Concerning the last point, centrality of $\sum_k(x^k)^2$, it is clear that we have in mind that the $x^k$ are ``orthonormal coordinates" with ``metric" $\sum_k x^k\otimes x^k$.

There is then a very natural definition of the corresponding generalization $C\ell(\ca)$ of the Clifford algebra as nonhomogeneous quadratic $\ast$-algebra with the Koszul dual $\ca^!$ of $\ca$ as homogeneous part and we have imposed the Koszul property for $\ca$ (or equivalently for $\ca^!$) to be sure that Condition \ref{i-ii} of Appendix A which is trivialy satisfied here implies the Poincar\'e-Birkhoff-Witt property for $C\ell(\ca)$. At this point it is worth noticing that this condition of Koszulity can be slightly weakened as pointed out in \cite{cas-she:2007} by imposing only
\beq\label{CS2}
\Ext^{3,3}_\ca(\mathbb C,\mathbb C)\not= 0\ \text{and}\ \Ext^{3,d}_\ca(\mathbb C,\mathbb C)=0\ \text{for}\ d>3
\eeq
but, on the other hand the Koszulity, which is equivalent to the purity assumption for $\Ext_\ca(\mathbb C,\mathbb C)$, is a very natural and desirable assumption here.

Finally the Gorenstein property is a well-suited noncommutative generalization of the Poincar\'e duality and the polynomial growth condition is adapted to the connexion between the number of independent generators and the global dimension that we have imposed (which is natural in the quadratic framework).

Let us notice that our definition of the noncommutative products of the Euclidean spaces $\mathbb R^{N_1}$ and $\mathbb R^{N_2}$ is tailored for the above definition of noncommutative Euclidean spaces $\mathbb R^n$: Our noncommutative products of $\mathbb R^{N—1}$ and $\mathbb R^{N_2}$ are noncommutative Euclidean spaces $\mathbb R^{N_1+N_2}$.

We could expect slightly more for the properties of the algebras $\ca$ corresponding to noncommutative Euclidean spaces $\mathbb R^n$. For instance it is natural to expect that the noncommutative spheres $S^{n-1}$ corresponding to $\ca/(\sum_k(x^k)^2-\bbbone)$ are noncommutative spherical manifolds in the sense of \cite{ac-lan:2001}, \cite{ac-mdv:2002a}. This would mean in view of Corollary \ref{Rprime}, to prove or assume that appropriate traces of products of the $\Gamma_k$ vanish to get the $K$-homological conditions of \cite{ac-lan:2001}, \cite{ac-mdv:2002a}. On the other hand, the homogeneization of these $K$-homological conditions leads to noncommutative Euclidean $\mathbb R^n$ in the above sense for all the cases which have been analyzed, in particular for all the cases with $n\leq 4$ and for all $n\in \mathbb N$ in the cases of the so-called $\theta$-deformations \cite{ac-mdv:2002a}. One may expect that it is always true and even that it leads to Calabi-Yau algebras \cite{gin:2006}; this latter point would mean in our quadratic Koszul Artin-Schelter context that the Nakayama automorphisms of the Koszul Frobenius algebras $\ca^!$ are trivial (i.e. are the identity mappings of the $\ca^!$).

Thus there remain a lot of open questions around the definition and the properties of the noncommutative Euclidean spaces.

\appendix

\section{Quadratic algebras}\label{quad}

To be definite we take the ground field to be complex numbers $\IC$. 
A homogeneous \emph{quadratic algebra} \cite{man:1988}, \cite{pol-pos:2005}.
is an associative algebra $\ca$ of the form
$$
\ca=A(E,R)=T(E)/(R)
$$
with $E$ a finite-dimensional vector space, $R$ a subspace of $E\otimes E$ and $(R)$ denoting the two-sided ideal of the tensor algebra $T(E)$ over $E$ generated by $R$. The space $E$ is the space of generators of $\ca$ and the subspace $R$ of $E\otimes E$ is the space of relations of $\ca$. The algebra $\ca=A(E,R)$ is naturally a graded algebra $\ca=\bigoplus_{n\in \mathbb N}\ca_n$ which is connected, that is such that $\ca_0=\IC i$ and generated by the degree 1 part, $\ca_1=E$.

To a quadratic algebra $\ca=A(E,R)$ as above one associates another quadratic algebra, \emph{its Koszul dual} $\ca^!$, defined by
$$
\ca^!=A(E^\ast,R^\perp)
$$
where $E^\ast$ denotes the dual vector space of $E$ and $R^\perp\subset E^\ast \otimes E^\ast$ is the orthogonal of the space of relations $R\subset E\otimes E$ defined by 
$$
R^\perp = \{\omega \in E^\ast\otimes E^\ast \, ; \, \langle\omega,r\rangle=0,\forall r\in R\} .
$$
As usual, by using the finite-dimensionality of $E$, one identifies $E^\ast \otimes E^\ast$ with the dual vector space $(E\otimes E)^\ast$ of $E\otimes E$. One has of course $(\ca^!)^!=\ca$
and the dual vector spaces $\ca^{!\ast}_n$ of the homogeneous components $\ca^!_n$ of $\ca^!$ are 
\beq
\ca^{!\ast}_1=E \qquad 
\mbox{and} \qquad 
\ca^{!\ast}_n=\bigcap_{r+s+2=n} E^{\otimes^r}\otimes R \otimes E^{\otimes^s}
\label{En}
\eeq
for $n\geq 2$, as easily verified. In particular $\ca^{!\ast}_2=R$ and $\ca^{!\ast}_n\subset E^{\otimes^n}$ for any $n\in \mathbb N$.

Consider the sequence of free left $\ca$-modules
\beq
K(\ca):\,  \cdots \stackrel{b}{\rightarrow}\ca\otimes \ca^{!\ast}_{n+1}\stackrel{b}{\rightarrow}\ca\otimes \ca^{!\ast}_n\rightarrow \dots \rightarrow \ca\otimes \ca^{!\ast}_2\stackrel{b}{\rightarrow}\ca\otimes E\stackrel{b}{\rightarrow}\ca\rightarrow 0
\label{K}
\eeq
where $b:\ca\otimes \ca^{!\ast}_{n+1}\rightarrow \ca\otimes \ca^{!\ast}_n$ is induced by the left $\ca$-module homomorphism of $\ca\otimes E^{\otimes^{n+1}}$ into $\ca\otimes E^{\otimes^n}$ defined by
$$
b(a\otimes (x_0\otimes x_1\otimes \dots \otimes x_n))=ax_0 \otimes (x_1\otimes \dots \otimes x_n)
$$
for $a\in \ca$, $x_k \in E$. It follows from \eqref{En} that $\ca^{!\ast}_n\subset R\otimes E^{\otimes^{n-2}}$ for $n\geq 2$, which implies that $b^2=0$. As a consequence, the sequence $K(\ca)$ in \eqref{K} is a chain complex of free left $\ca$-modules called the \emph{Koszul complex} of the quadratic algebra $\ca$. 
The quadratic algebra $\ca$ is said to be a \emph{Koszul algebra} whenever its Koszul complex is acyclic in positive degrees, that is, whenever $H_n(K(\ca))=0$ for $n\geq 1$. One shows easily that $\ca$ is a Koszul algebra if and only if its Koszul dual $\ca^!$ is a Koszul algebra.

It is important to realize that the presentation of $\ca$ by generators and relations is equivalent to the exactness of the sequence
$$
\ca\otimes R \stackrel{b}{\rightarrow} \ca\otimes E \stackrel{b}{\rightarrow}\ca \stackrel{\varepsilon}{\rightarrow} \IC\rightarrow 0
$$
with $\varepsilon$ the map induced by the projection onto degree $0$. Thus one always has
$$
H_1(K(\ca))=0 \qquad \mbox{and} \qquad H_0(K(\ca))=\IC
$$
and, whenever $\ca$ is Koszul, the sequence
$$
K(\ca)\stackrel{\varepsilon}{\rightarrow} \IC \rightarrow 0 
$$
is a free resolution of the trivial module $\IC$. This resolution is then a minimal projective resolution of $\IC$ in the category of graded modules  \cite{car:1958}.

Let $\ca=A(E,R)$ be a quadratic Koszul algebra such that $\ca^!_D\not=0$ and $\ca^!_n=0$ for $n>D$. 
Then the trivial (left) module $\IC$ has projective dimension $D$ which implies that $\ca$ has global dimension $D$ (see \cite{car:1958}). This also implies that the Hochschild dimension of $\ca$ is $D$ (see \cite{ber:2005}). 
By applying the functor $\Hom_\ca(\, \cdot  , \ca)$ to the chain complex $K(\ca)$ of left $\ca$-modules one obtains the cochain complex $L(\ca)$ of right $\ca$-modules 
$$
L(\ca) : \qquad \quad 0\rightarrow \ca\stackrel{b'}{\rightarrow} \dots\stackrel{b'}{\rightarrow}\ca^!_n \otimes \ca \stackrel{b'}{\rightarrow} \ca^!_{n+1} \otimes \ca \stackrel{b'}{\rightarrow} \cdots 
$$
where $b'$ is the left multiplication by $\sum_k\theta^k\otimes e_k$ in $\ca^!\otimes \ca$ and $(e_k)$ is a basis of $E$ with dual basis $(\theta^k)$. The algebra $\ca$ is said to be \emph{Koszul-Gorenstein} if it is Koszul of finite global dimension $D$ as above and if $H^n(L(\ca))=\IC \, \delta^n_D$. Notice that this implies that $\ca^!_n\simeq \ca^{!\ast}_{D-n}$ as vector spaces (this is a version of Poincar\'e duality).

Finally, a graded algebra $\ca=\oplus_n \ca_n$ is said to have \emph{polynomial growth} whenever there are a positive $C$ and $N \in \IN$ such that
$$
\dim (\ca_n)\leq C n^{N-1} 
$$
for any $n \in \IN$ and a {\sl Koszul Artin-Schelter} algebra of global dimension $D$ is simply a Koszul Gorenstein algebra of global dimension $D$ which has polynomial growth.

As before, let $E$ be a finite-dimensional vector space with the tensor algebra $T(E)$ 
endowed with its natural filtration $F^n(T(E))=\bigoplus_{m\leq n} E^{\otimes^m}$.
A {\sl nonhomogeneous quadratic algebra} \cite{pos:1993} and \cite{bra-gai:1996}, 
is an algebra $\fraca$ of the form
$$
\fraca=A(E,P)=T(E)/(P)
$$
where $P$ is a subspace of $F^2(T(E))$ and where $(P)$ denotes as above the two-sided ideal of $T(E)$
generated by $P$. The filtration of the tensor algebra $T(E)$ induces a filtration $F^n(\fraca)$ of $\fraca$ and one associates to $\fraca$ the graded algebra
$$
\mbox{gr} (\fraca)=\oplus_n F^n(\fraca)/F^{n-1}(\fraca) 
$$
with the convention $F^n(\ca)=0$ for $n<0$.
Let $R$ be the image of $P$ under the canonical projection of $F^2(T(E))$ onto $E\otimes E$ and let $\ca=A(E,R)$ be the homogeneous quadratic algebra $T(E)/(R)$; this $\ca$ is referred to as the {\sl quadratic part} of $\fraca$. There is a canonical surjective homomorphism 
$$
\mbox{can} :\ca\rightarrow \mbox{gr}(\fraca) 
$$
of graded algebras.
One says that $\fraca$ has the {\sl  Poincar\'e-Birkhoff-Witt (PBW) property} whenever this homomorphism is an isomorphism. The terminology  comes from the example 
where $\fraca=U(\fracg)$ the universal enveloping algebra of a Lie algebra $\fracg$.
A central result (see  \cite{bra-gai:1996} and \cite{pol-pos:2005}) states that if $\fraca$ has the PBW property then the following conditions are satisfied: 
\begin{align}\label{i-ii}
& \mathrm{(i)} \quad P\cap F^1 (T(E))=0 , \nn \\
& \mathrm{(ii)} \quad (P \cdot E + E \cdot P)\cap F^2 (T(E)) \subset P 
\end{align}
\noindent 
and that conversely, if the quadratic part $\ca$ is a Koszul algebra, the conditions 
$\mathrm{(i)}$ and $\mathrm{(ii)}$ imply that $\fraca$ has the PBW property.
 Condition (i) means that $P$ is obtained from $R$ by adding to each non-zero element of $R$ terms of degrees 1 and 0. That is there are linear mappings $\psi_1:R\rightarrow E$ and $\psi_0:R
 \rightarrow \IC$ such that one has
\beq
P=\{r+\psi_1(r)+\psi_0(r) \bbbone \,\, \vert \,\, r \in R\}
\label{PR}
\eeq
giving $P$ in terms of $R$. Condition (ii) is a generalisation of the Jacobi identity (see \cite{pol-pos:2005}).


\end{document}